\date{}
\numberwithin{equation}{section}
\newtheorem{definition}{Definition}[section]
\newtheorem{theorem}{Theorem}[section]
\newtheorem{lemma}{Lemma}[section]
\newtheorem{algorithm}{Algorithm}[section]
\newtheorem{remark}{Remark} [section]
\newcommand{\R}{{\mathbb{R}}}
\newcommand{\IN}{{\mathbb{N}}}
\newcommand{\dist}{{\rm dist}}
\title{Convergence Rates of Subgradient Methods for Quasi-convex Optimization Problems}
\author{Yaohua Hu\thanks{Shenzhen Key Laboratory of Advanced Machine Learning and Applications, College of Mathematics and Statistics, Shenzhen University, Shenzhen 518060, P. R. China (mayhhu@szu.edu.cn). This author's work was supported in part by the National Natural Science Foundation of China (11601343, 11871347), Natural Science Foundation of Guangdong (2016A030310038), Natural Science Foundation of Shenzhen (JCYJ20170817100950436, JCYJ20170818091621856) and Interdisciplinary Innovation Team of Shenzhen University.},
\quad Jiawen Li\thanks{College of Mathematics and Statistics, Shenzhen University, Shenzhen 518060, P. R. China (654439697@qq.com).},
\quad Carisa Kwok Wai Yu\thanks{Department of Mathematics and Statistics, The Hang Seng University of Hong Kong, Hong Kong (carisayu@hsu.edu.hk). This author's work was supported in part by grants from the Research Grants Council of the Hong Kong Special Administrative Region, China (UGC/FDS14/P02/15 and UGC/FDS14/P02/17).}
}
\begin{document}

\maketitle

\noindent {\bf Abstract}\quad
Quasi-convex optimization acts a pivotal part in many fields including economics and finance; the subgradient method is an effective iterative algorithm for solving large-scale quasi-convex optimization problems.
In this paper, we investigate the iteration complexity and convergence rates of various subgradient methods for solving quasi-convex optimization in a unified framework. In particular, we consider a sequence satisfying a general (inexact) basic inequality, and investigate the global convergence theorem and the iteration complexity when using the constant, diminishing or dynamic stepsize rules. More important, we establish the linear (or sublinear) convergence rates of the sequence under an additional assumption of weak sharp minima of H\"{o}lderian order and upper bounded noise.
These convergence theorems are applied to establish the iteration complexity and convergence rates of several subgradient methods, including the standard/inexact/conditional subgradient methods, for solving quasi-convex optimization problems under the assumptions of the H\"{o}lder condition and/or the weak sharp minima of H\"{o}lderian order.

\noindent {\bf Keywords}\quad  Quasi-convex programming, subgradient method, iteration complexity, convergence rates.

\noindent {\bf AMS subject classifications}\quad Primary, 65K05, 90C26; Secondary, 49M37.


\section{Introduction}
Mathematical optimization is a fundamental tool for solving decision-making problems in many disciplines. Convex optimization plays a key role in mathematical optimization, but may not be applicable to some problems encountered in economics, finance and management science. In contrast to convex optimization, quasi-convex optimization usually provides a much more accurate representation of realities and still possesses some desirable properties of convex optimization. In recent decades, more and more attention has been drawn to quasi-convex optimization; see \cite{Avriel10,CrouzeixMM98,HaKS05,Mastrogiacomo2015,Stancu97} and references therein.

It is essential to develop efficient numerical algorithms for quasi-convex optimization problems. Subgradient methods form a class of practical and effective iterative algorithms for solving constrained (convex or quasi-convex) optimization problems. The subgradient method was originally introduced by Polyak \cite{Polyak67} and Ermoliev \cite{Ermoliev66} in the 1970s to solve nondifferentiable convex optimization problems. Over the past 40 years, various features of subgradient methods have been established for convex optimization problems and many extensions/generalizations have been devised for structured convex optimization problems; see \cite{Auslender09,Bertsekas99,HuSIOPT16,Kiwiel04,LarssonPS96,LuZL2017,Nedic01,Nesterov09,Nesterov18,Shor85,HuIP17} and references therein. Moreover, subgradient methods have also been extended and developed to solve constrained quasi-convex optimization problems; see \cite{HuEJOR15, HuJNCA16b,Kiwiel01,Konnov03,Quiroz2015,HuOpt18} and references therein. The global convergence properties of subgradient methods, in terms of function values and distances of iterates from the optimal solution set, have been well studied for either convex or quasi-convex optimization problems by using several typical stepsize rules.

In addition to the global convergence property, the establishment of convergence rates is important in analyzing the numerical performance of relevant algorithms. The convergence rates of subgradient methods for solving convex optimization problems have been investigated under the assumption of weak sharp minima \cite{BurkeF93}. For example, employing a geometrically decaying stepsize, Shor \cite[Theorem 2.7]{Shor85} and Goffin \cite[Theorem 4.4]{Goffin1977} provided a linear convergence analysis (but not necessarily converge to an optimal solution) of the subgradient method under a notion of condition number that is a stronger condition than the weak sharp minima. This work was extended by Polyak \cite[Theorem 4]{Poljak1978} to the case when the subgradients are corrupted by deterministic noise. When using a dynamic stepsize rule, Brannlund \cite[Theorem 2.5]{Brannlund1995} established the linear convergence rate of the subgradient method under the assumption of weak sharp minima, which was generalized by Robinson \cite[Theorem 3]{Robinson1999} to the inexact subgradient method. Recently, for vast applications of distributed optimization, an incremental subgradient method was proposed to solve the convex sum-minimization problem and was shown to converge linearly under the assumption of weak sharp minima (see \cite[Proposition 2.11]{NedicBert2001}); the linear convergence analysis has also been extended to various variant incremental subgradient methods; see \cite{IACA2017,Mokhtari2018} and references therein. It is worth noting that all the convergence rates of subgradient methods are characterized in terms of distances of iterates from the optimal solution set. Moreover, Freund and Lu \cite{Freund2018} and Xu et al. \cite{XuLY2018} investigated the iteration complexity (for achieving an approximate optimal value) of subgradient methods and stochastic subgradient methods under the assumption of weak sharp minima or weak sharp minima of H\"{o}lderian order, respectively. 
However, to the best of our knowledge, there is still limited study devoted to establishing the convergence rates of subgradient methods in the scenario of quasi-convex optimization.

Motivated by the wide applications of quasi-convex optimization, we contribute to the further development of subgradient methods for solving quasi-convex optimization problems, particularly concentrating on the iteration complexity and convergence rate analysis. In the present paper, we consider the following constrained quasi-convex optimization problem
\begin{equation}\label{eq-QCO}
  \begin{array}{ll}
     \text{min}&f(x)\\
     \text{s.t.}  & x\in X,
    \end{array}
 \end{equation}
where $f:\R^n\rightarrow \R$ is a quasi-convex and continuous function, and $X\subseteq \R^n$ is a nonempty, closed and convex set. Kiwiel \cite{Kiwiel01} proposed a (standard) subgradient method to solve problem \eqref{eq-QCO}, which is formally stated as follows, where $\partial^* f$, $\mathbf{S}$ and $\mathbb{P}_X(\cdot)$ denote the quasi-subdifferential of $f$, the unit sphere centered at the origin and the Euclidean projection onto $X$, respectively.
\begin{algorithm}\label{alg-SGM}
{\rm
Select an initial point $x_1\in \R^n$ and a sequence of stepsizes $\{v_k\}\subseteq (0,+\infty)$. For each $k\in \IN$, having $x_k$, we select
$g_k\in \partial^* f(x_k)\cap \mathbf{S}$ and update $x_{k+1}$ by
\[x_{k+1}:=\mathbb{P}_X (x_k-v_k g_k).\]
}
\end{algorithm}
Note that a basic inequality of a subgradient iteration is a key tool for the convergence analysis of subgradient methods for either convex or quasi-convex optimization. Yu et al. \cite{HuOpt18} developed a unified framework of convergence analysis for various subgradient methods, in which the global convergence theorem was established for a certain class of sequences satisfying a general basic inequality and using several typical stepsize rules. In real applications, the computation error stems from practical considerations and is inevitable in the computing process. To meet the requirements of practical applications, Hu et al. \cite{HuEJOR15} introduced an inexact subgradient method to solve problem \eqref{eq-QCO} and investigated the influence of the deterministic noise on the inexact subgradient method. In the present paper, motivated by the practical applications, we consider a more general unified framework for subgradient methods, in which a general (inexact) basic inequality is assumed to be satisfied; see conditions (H1) and (H2) in Section 3. The more general unified framework covers various subgradient methods (see \cite{HuOpt18}) and inexact subgradient methods (see Section 4.2) for either convex or quasi-convex optimization problems.

The main contribution of the present paper is to investigate the iteration complexity and convergence rates of several subgradient methods for solving quasi-convex optimization problem \eqref{eq-QCO} via a unified framework. The stepsize rules adopted in this paper are the constant, diminishing and dynamic stepsizes. Here, we first consider the sequence satisfying conditions (H1) and (H2), establish the global convergence theorem in terms of function values and distances of iterates from the optimal solution set (see Theorem \ref{thm-abs}), and then derive the iteration complexity to obtain an approximate optimal solution (see Theorem \ref{thm-ce-abs} and Remark \ref{rem-ci}). More important, we explore the linear (or sublinear) convergence rates of the sequence under an additional assumption of weak sharp minima of H\"{o}lderian order and the upper bounded noise; see Theorems \ref{thm-abs-con}-\ref{thm-abs-dimi} for details.
Meanwhile, we will apply the established theorems on iteration complexity and convergence rates to investigate that of several subgradient methods for solving quasi-convex optimization problem \eqref{eq-QCO} when using the typical stepsize rules and under the assumptions of the H\"{o}lder condition and/or the weak sharp minima of H\"{o}lderian order. As far as we know, the study of iteration complexity and convergence rates theorems of subgradient methods are new in the literature of quasi-convex optimization.

In a very recent study by Johnstone and Moulin \cite{Johnstone2019}, by virtue of the weak sharp minima of H\"{o}lderian order, they derived the convergence rates of the standard subgradient method for solving convex optimization problems when using the constant-type stepsize rules (including the constant and ``descending stairs" stepsize rules). Our work extends \cite{Johnstone2019} to more general settings with quasi-convex optimization, inexact setting, diminishing and dynamic stepsize rules.

The present paper is organized as follows. In Section 2, we present the notations and preliminary results which will be used in this paper. In Section 3, we provide a unified framework of convergence analysis for (inexact) subgradient methods, in which the global convergence theorem, iteration complexity and convergence rates are established for a certain class of sequences satisfying a general (inexact) basic inequality under the assumption of weak sharp minima of H\"{o}lderian order. In Section 4, the convergence analysis framework is applied to establish the iteration complexity and convergence rates for the standard subgradient method, the inexact subgradient method and the conditional subgradient method for solving quasi-convex optimization problems.

\section{Notations and preliminary results}
The notations used in the present paper are standard; see, e.g., \cite{Bertsekas99}. We consider the $n$-dimensional Euclidean space $\R^n$ with inner product $\langle \cdot,\cdot\rangle$ and norm $\|\cdot\|$. For $x\in \R^n$ and $r>0$, we use $\mathbb{B}(x,r)$ to denote the closed ball centered at $x$ with radius $r$, and use $\mathbf{S}$ to denote the unit sphere centered at the origin.
For $x\in \R^n$ and $Z\subseteq \R^n$, the Euclidean distance of $x$ from $Z$ and the Euclidean projection of $x$ onto $Z$ are respectively defined by
\begin{equation*}
\dist(x, Z):=\min_{z\in Z}\|x-z\|\quad {\rm and} \quad \mathbb{P}_Z(x):={\rm arg}\min_{z\in Z} \|x-z\|.
\end{equation*}
A function $f:\R^n\rightarrow \R$ is said to be quasi-convex if for each $x,y\in \R^n$ and each $\alpha\in [0,1]$, the following inequality holds
\begin{equation*}
f((1-\alpha)x+\alpha y)\le \max\{f(x),f(y)\}.
\end{equation*}
For $\alpha\in\R$, the sublevel sets of $f$ are denoted by
\[
{\rm lev}_{<\alpha}f:=\{x\in \R^n :f(x)<\alpha\}\quad {\rm and} \quad {\rm lev}_{\le \alpha}f:=\{x\in \R^n :f(x)\le\alpha\}.
\]
It is well-known that $f$ is quasi-convex if and only if ${\rm lev}_{<\alpha}f$ $\left({\rm and/or}~{\rm lev}_{\le \alpha}f\right)$ is convex for each $\alpha\in\R$. A function $f:\R^n\rightarrow \R$ is said to be coercive if $\lim_{\|x\|\to \infty} f(x)=\infty$, and so its sublevel set ${\rm lev}_{\le \alpha}f$ is bounded for each $\alpha\in\R$.

The convex subdifferential $\partial f(x):=\{g\in \R^n: f(y)\ge f(x)+\langle g,y-x\rangle, \forall y\in \R^n\}$ might be empty for the quasi-convex functions. Hence, the introduction of (nonempty) subdifferential of quasi-convex functions plays an important role in quasi-convex optimization. Several different types of subdifferentials of quasi-convex functions have been introduced in the literature, see \cite{Aussel00,GrP73,HuEJOR15,Kiwiel01} and references therein. In particular, Kiwiel \cite{Kiwiel01} and Hu et al \cite{HuEJOR15} introduced a quasi-subdifferential and applied this quasi-subgradient in their proposed subgradient methods; see, e.g.,  \cite{HuEJOR15,HuJNCA16b,Kiwiel01}.

\begin{definition}
Let $f:\R^n \to \R$ be a quasi-convex function and let $\epsilon>0$.
The quasi-subdifferential and $\epsilon$-quasi-subdifferential of $f$ at $x\in \R^n$ are respectively defined by
\begin{equation*}\label{eq-def-QS}
\partial^* f(x):=\left\{g\in \R^n:\langle g,y-x\rangle \le 0, \forall y\in {\rm lev}_{<f(x)}f\right\},
\end{equation*}
and
\begin{equation*}\label{eq-def-QS-epsilon}
\partial^*_{\epsilon} f(x):=\left\{g\in \R^n:\langle g,y-x\rangle \le 0, \forall y\in {\rm lev}_{<f(x)-\epsilon}f\right\}.
\end{equation*}
Any vector $g\in \partial^* f(x)$ or $g\in \partial^*_{\epsilon} f(x)$ is called a quasi-subgradient or an $\epsilon$-quasi-subgradient of $f$ at $x$, respectively.
\end{definition}
It is clear from definition that the quasi-subdifferential is a normal cone to the strict sublevel set of the quasi-convex function, and it was shown in \cite[Lemma 2.1]{HuEJOR15} that $\partial^* f(x)\setminus \{0\}\neq \emptyset$ whenever $f$ is quasi-convex. Hence, the quasi-subdifferential of a quasi-convex function contains at least a unit vector. This is a special property that the convex subdifferential does not share. In particular, it was claimed in \cite{HuEJOR15} that the quasi-subdifferential coincides with the convex cone hull of the convex subdifferential whenever $f$ is convex.

The H\"{o}lder condition (restricted to the set of minima) was used in \cite{Konnov94} to describe some properties of the quasi-subgradient, and it plays a critical role in the study of convergence analysis in quasi-convex optimization; see, e.g., \cite{HuEJOR15,HuOpt18}. The notion of H\"{o}lder condition has been widely studied and applied in harmonic analysis, fractional analysis and management science; see, e.g., \cite{Aussel00,Stancu97}. It is worth noting that the classical Lipschitz condition (i.e., H\"{o}lder condition of order 1) is equivalent to the bounded subgradient assumption, which is always assumed in the literature of subgradient methods (see, e.g., \cite{Bertsekas99,Kiwiel04,Shor85}), whenever $f$ is convex. $f^*$ and $X^*$ denote the minimum value and the set of minima of problem \eqref{eq-QCO}, respectively.
\begin{definition}\label{def-HC}
Let $p\in (0,1]$ and $L>0$. The function $f:\R^n\rightarrow \R$ is said to satisfy the H\"{o}lder condition (restricted to the set of minima $X^*$) of order $p$ with modulus $L$ on $\R^n$ if
\begin{equation*}\label{eq-HC}
f(x)-f^*\le L\dist^p(x, X^*)\quad \mbox{for each } x\in \R^n.
\end{equation*}
\end{definition}

%

We end this section by the following lemmas, which are useful to establish the unified framework of convergence analysis. In particular, Lemmas \ref{lem-averaging} is taken from \cite[Lemma 2.1]{Kiwiel04}.

\begin{lemma}\label{lem-averaging}
Let $\{a_k\}$ be a scalar sequence and let $\{w_k\}$ be a sequence of nonnegative scalars.
Suppose that $\lim_{k\to \infty} \sum_{i=1}^k w_i= \infty$. Then, it holds that
\[
\liminf_{k\to \infty} a_k\le \liminf_{k\to \infty} \frac{\sum_{i=1}^k w_i a_i}{\sum_{i=1}^k w_i}
\le \limsup_{k\to \infty} \frac{\sum_{i=1}^k w_i a_i}{\sum_{i=1}^k w_i}\le \limsup_{k\to \infty}a_k.
 \]
\end{lemma}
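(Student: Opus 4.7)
The plan is to prove the chain of inequalities from left to right; the middle inequality is immediate from the general fact that $\liminf \le \limsup$ for any real sequence, and the third inequality follows from the first by applying it to the sequence $\{-a_k\}$ (noting that $\liminf(-a_k)=-\limsup a_k$ and the weighted averages transform accordingly). So the whole problem reduces to establishing
\[
\liminf_{k\to\infty} a_k \;\le\; \liminf_{k\to\infty} \frac{\sum_{i=1}^k w_i a_i}{\sum_{i=1}^k w_i}.
\]

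To prove this, I would split into cases according to the value of $L:=\liminf_{k\to\infty} a_k \in [-\infty,+\infty]$. If $L=-\infty$ there is nothing to show. If $L\in\R$, fix an arbitrary $\epsilon>0$ and choose $N=N(\epsilon)$ with $a_i > L-\epsilon$ for all $i\ge N$. Then for every $k\ge N$ I would split the weighted sum as
\[
\frac{\sum_{i=1}^k w_i a_i}{\sum_{i=1}^k w_i}
\;=\; \frac{\sum_{i=1}^{N-1} w_i a_i}{\sum_{i=1}^k w_i}
\;+\; \frac{\sum_{i=N}^{k} w_i a_i}{\sum_{i=1}^k w_i}
\;\ge\; \frac{\sum_{i=1}^{N-1} w_i a_i}{\sum_{i=1}^k w_i}
+ (L-\epsilon)\,\frac{\sum_{i=N}^{k} w_i}{\sum_{i=1}^k w_i}.
\]
Using $\sum_{i=1}^k w_i\to\infty$, the first term on the right tends to $0$ (its numerator is a fixed real number) and the ratio in the second term tends to $1$. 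Passing to $\liminf$ and then letting $\epsilon\downarrow 0$ gives the desired bound. The case $L=+\infty$ is handled by the same splitting: given any $M>0$, pick $N$ with $a_i>M$ for $i\ge N$ and argue as above to conclude the weighted averages eventually exceed any $M$.

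The main obstacle, modest though it is, is making sure the splitting argument is valid when some $w_i$ may be zero or when $L=+\infty$; the hypothesis $\sum_{i=1}^k w_i\to\infty$ guarantees that the denominators are eventually positive and that the contribution of the first $N-1$ terms is washed out in the limit, so both difficulties are controlled. Once the left-hand inequality is established, the right-hand one is obtained by symmetry (replacing $a_k$ by $-a_k$ does not affect the weights $w_k\ge 0$ or the denominator), and the middle inequality is tautological, completing the proof.
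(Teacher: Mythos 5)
Your proof is correct. Note that the paper does not actually prove this lemma --- it is quoted verbatim from Kiwiel (2004, Lemma~2.1) --- so there is no in-paper argument to compare against; your tail-splitting argument (fix $N$ with $a_i>L-\epsilon$ for $i\ge N$, separate the head $\sum_{i=1}^{N-1}w_ia_i$, which is washed out since $\sum_{i=1}^k w_i\to\infty$, and bound the tail below by $(L-\epsilon)\sum_{i=N}^k w_i$) is the standard one, and your handling of the edge cases ($L=\pm\infty$, eventually positive denominators, nonnegativity of the $w_i$ needed for the tail bound, and the reduction of the $\limsup$ inequality to the $\liminf$ one via $a_k\mapsto -a_k$) is complete.
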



\begin{lemma}\label{lem-new}
Let $r>0$, $a>0$, $b\ge 0$, and let $\{u_k\}$ be a sequence of nonnegative scalars such that
\begin{equation}\label{eq-lem-new1}
u_{k+1}\le u_k-a u_k^{1+r} +b \quad \mbox{for each } k\in \IN.
\end{equation}
\begin{enumerate}[{\rm (i)}]
  \item If $b=0$, then
    \[
    u_{k+1}\le u_1\left(1+rau_1^rk\right)^{-\frac1r}\quad \mbox{for each } k\in \IN.
    \]
  \item If $0<b<a^{-\frac1r}(1+r)^{-\frac{1+r}r}$, then there exists $\tau\in (0,1)$ such that
    \[
    u_{k+1}\le u_1\tau^k  + \left(\frac{b}a\right)^{\frac1{1+r}} \quad \mbox{for each } k\in \IN.
    \]
\end{enumerate}
\end{lemma}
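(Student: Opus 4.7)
The plan is to prove parts (i) and (ii) separately, in each case reducing the nonlinear recursion to one whose iterates admit an explicit bound, and then handling the degenerate cases where the key algebraic manipulations break down.

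For part (i) with $b=0$, I would linearize the recursion by passing to the reciprocal $u_k^{-r}$. Assuming temporarily that $u_k>0$ and $au_k^r<1$, one factors $u_{k+1}\le u_k(1-au_k^r)$ with positive right-hand side, raises to the $-r$ power, and applies the Bernoulli-type inequality $(1-x)^{-r}\ge 1+rx$ valid for $x\in[0,1)$ and $r>0$, obtaining the additive recursion $u_{k+1}^{-r}\ge u_k^{-r}+ra$. Telescoping yields $u_{k+1}^{-r}\ge u_1^{-r}+kra$, which rearranges into the stated bound $u_{k+1}\le u_1(1+rau_1^r k)^{-1/r}$. The degenerate cases (either $u_k=0$ or $au_k^r\ge 1$) both force $u_{k+1}\le 0$ and hence $u_j=0$ for all subsequent $j$, after which the bound holds trivially.

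For part (ii) I would study the iteration map $h(u):=u-au^{1+r}+b$ near its fixed point $u^*:=(b/a)^{1/(1+r)}$, noting that $h(u^*)=u^*$ because $a(u^*)^{1+r}=b$. First I verify the algebraic identity $a(1+r)(u^*)^r=(1+r)(ab^r)^{1/(1+r)}$ and observe that the hypothesis $b<a^{-1/r}(1+r)^{-(1+r)/r}$ is precisely equivalent to $a(1+r)(u^*)^r<1$, so that $\tau:=1-a(1+r)(u^*)^r$ lies in $(0,1)$. Two invariance properties then drive the argument: (a) if $u_k\le u^*$, then $u_{k+1}\le u^*$, since $h'(u)=1-a(1+r)u^r$ is decreasing in $u$ with $h'(u^*)=\tau>0$, so $h$ is increasing on $[0,u^*]$ and $h(u^*)=u^*$; and (b) if $u_k>u^*$, then $u_{k+1}-u^*\le\tau(u_k-u^*)$, which follows from
\[
a\bigl(u_k^{1+r}-(u^*)^{1+r}\bigr)=a\int_{u^*}^{u_k}(1+r)t^r\,dt\ge a(1+r)(u^*)^r(u_k-u^*).
\]
Combining (a) and (b): if $u_1\le u^*$, then every iterate stays below $u^*$ and the bound $u_{k+1}\le u_1\tau^k+u^*$ is immediate; otherwise (b) iterates geometrically while $u_k>u^*$, and (a) takes over from the first time (if any) an iterate falls below $u^*$, giving $u_{k+1}\le u^*+\tau^k u_1$ in all cases.

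The calculation in part (i) reduces to one Bernoulli step plus a telescoping sum, with the only care needed being the nonnegativity corner cases. The main obstacle I anticipate is in part (ii): stating cleanly the equivalence between the hypothesized bound on $b$ and the contractivity $\tau<1$, and justifying that $u^*$ lies strictly to the left of the critical point of $h$, which is what makes the one-sided contraction in (b) robust and the invariance (a) valid.
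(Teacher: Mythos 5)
Your proposal is correct. For part (ii) it is essentially the paper's own argument: the paper sets $u^*:=(b/a)^{1/(1+r)}$, rewrites the recursion as $u_{k+1}-u^*\le (u_k-u^*)-a\bigl(u_k^{1+r}-(u^*)^{1+r}\bigr)$, invokes the convexity of $t\mapsto t^{1+r}$ to get $u_k^{1+r}-(u^*)^{1+r}\ge (1+r)(u^*)^r(u_k-u^*)$, and iterates with the same $\tau=1-a(1+r)(u^*)^r$. The only structural difference is that you split into the cases $u_k\le u^*$ (invariance of $[0,u^*]$ under $h$) and $u_k>u^*$ (one-sided contraction); this case split is avoidable, because the tangent-line inequality above is a consequence of convexity and therefore holds for \emph{all} $u_k\ge 0$, not only for $u_k>u^*$, so one obtains $u_{k+1}-u^*\le\tau(u_k-u^*)$ unconditionally and can iterate directly (positivity of $\tau$ is what lets the inequalities chain). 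Your verification that the hypothesis on $b$ is exactly $a(1+r)(u^*)^r<1$ matches the paper's use of that hypothesis. For part (i) the paper gives no proof at all (it cites Polyak's book), so your reciprocal-plus-Bernoulli argument, with the telescoping of $u_{k+1}^{-r}\ge u_k^{-r}+ra$ and the explicit treatment of the degenerate indices where $u_k=0$ or $au_k^r\ge 1$, is a genuine self-contained addition and is sound.
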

\begin{proof}
Assertion (i) of this lemma is taken from \cite[pp. 46, Lemma 6]{Polyak87}, then it remains to prove assertion (ii). For this purpose, let $u:=\left(\frac{b}a\right)^{\frac1{1+r}}$. Then, \eqref{eq-lem-new1} is reduced to
\begin{equation}\label{eq-lem-new2}
u_{k+1}-u\le u_k-u - a \left(u_k^{1+r} -u^{1+r}\right) \quad \mbox{for each } k\in \IN.
\end{equation}
As $r>0$, by the convexity of $h(t):=t^{1+r}$, one has that $u_k^{1+r} -u^{1+r}\ge (1+r)u^r(u_k-u)$. Then \eqref{eq-lem-new2} implies that
\[
u_{k+1}-u\le \left(1-a(1+r)u^r\right)(u_k-u) \quad \mbox{for each } k\in \IN.
\]
Let $\tau:=1-a(1+r)u^r$. It follows from the assumption that $\tau\in (0,1)$, and thus,
\[
u_{k+1}-u\le \tau^k (u_1-u) \quad \mbox{for each } k\in \IN.
\]
The conclusion follows and the proof is complete.
\end{proof}

\begin{lemma}\label{lem-dimi}
Let $a>0$, $b>0$, $s \in (0,1)$ and $t\ge s$, and let $\{u_k\}$ be a sequence of nonnegative scalars such that
\begin{equation}\label{eq-lem-new3}
u_{k+1}\le \left(1-ak^{-s}\right) u_k+bk^{-t}\quad \mbox{for each } k\in \IN.
\end{equation}
\begin{enumerate}[{\rm (i)}]
    \item If $t>s$, then
    \[
    u_{k+1}\le \frac{b}{a}k^{s-t} +o(k^{s-t}).
    \]
  \item If $t=s$, then there exists $\tau\in (0,1)$ such that
    \[
    u_{k+1}\le u_1 e^{\frac{as}{1-s}} \tau^k +\frac{b}a  \quad \mbox{for each } k\in \IN.
  \]
\end{enumerate}
\end{lemma}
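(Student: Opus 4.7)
The plan for both parts is to remove the inhomogeneous forcing term and reduce to a purely multiplicative recursion, then estimate the resulting product. For part (i), where $t>s$, I would try the inductive ansatz $u_k \le C k^{s-t}$ with a constant $C$ slightly larger than $b/a$. Plugging this into \eqref{eq-lem-new3} produces
\[
(1-ak^{-s})Ck^{s-t} + bk^{-t} = Ck^{s-t} + (b-aC)k^{-t},
\]
and comparing term by term with the expansion $C(k+1)^{s-t} = Ck^{s-t} + C(s-t)k^{s-t-1} + O(k^{s-t-2})$ shows that the induction step holds for all $k$ beyond some threshold whenever $C > b/a$. The leading contribution is $\tfrac{b}{a}k^{s-t}$, and absorbing the finitely many early iterates into a slightly larger constant produces the $o(k^{s-t})$ remainder.

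For part (ii), with $t=s$, the natural substitution is $v_k := u_k - b/a$: a direct computation gives the homogeneous recursion $v_{k+1} \le (1-ak^{-s})v_k$, since the forcing term $bk^{-s}$ is precisely the amount needed to cancel the shift of $b/a$. Iterating yields
\[
v_{k+1} \le v_1 \prod_{i=1}^{k}(1-ai^{-s}),
\]
and the inequality $\log(1-x)\le -x$ combined with the integral estimate $\sum_{i=1}^{k} i^{-s} \ge \int_{1}^{k+1} x^{-s}\,dx = \tfrac{(k+1)^{1-s}-1}{1-s}$ converts the product into an exponential envelope. The constant $\exp(a/(1-s))$ produced by the lower limit of the integral, after bookkeeping involving $v_1 \le u_1$ and the reassembly $u_{k+1} = v_{k+1} + b/a$, is what feeds into the prefactor $e^{as/(1-s)}$ claimed in the statement. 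A constant $\tau\in(0,1)$ is then selected to dominate the remaining decay uniformly in $k$.

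The main technical obstacle is part (ii): extracting a clean geometric rate $\tau^k$ from the product $\prod_{i=1}^k (1-ai^{-s})$ is delicate because the per-step factor $1-ai^{-s}$ degenerates to $1$ as $i\to\infty$, and the natural integral bound gives a stretched-exponential envelope $\exp(-ak^{1-s}/(1-s))$. Packaging this as $\tau^k$ requires splitting the first few iterates (where the contraction is strongest) from the tail and organizing the constants so that the bound is valid uniformly for every $k\in\IN$, not merely asymptotically. By contrast, once the polynomial ansatz is in hand, part (i) reduces to a routine induction together with an $o(\cdot)$ bookkeeping step.
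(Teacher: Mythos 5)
Your part (ii) follows essentially the same route as the paper: the shift $v_k:=u_k-b/a$ does homogenize the recursion, and the product $\prod_{i=1}^{k}(1-ai^{-s})$ is then estimated via $\ln(1-x)\le -x$ and an integral comparison. But you stop exactly at the step that carries all the difficulty, and that step would fail. As you yourself observe, the integral estimate yields the stretched-exponential envelope $\exp\bigl(-\tfrac{a}{1-s}((k+1)^{1-s}-1)\bigr)$, and for $s\in(0,1)$ this is \emph{asymptotically larger} than $C\tau^{k}$ for every fixed $\tau\in(0,1)$ and $C>0$, because $k\ln(1/\tau)-\tfrac{a}{1-s}k^{1-s}\to+\infty$. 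No splitting of the early iterates from the tail and no reorganization of constants can convert $e^{-ck^{1-s}}$ into a geometric rate valid for all $k\in\IN$: the two decay scales are incomparable in the direction you need, and the recursion \eqref{eq-lem-new3} taken with equality already saturates the product bound, so nothing sharper is available from \eqref{eq-lem-new3} alone. This is a genuine gap at the central point of the argument. (For what it is worth, the paper's own proof asserts $e^{\int_{1}^{k+1}-at^{-s}\,{\rm d}t}\le e^{\frac{as}{1-s}}e^{-ak}$, which simplifies to $(k+1)^{s}\le\frac{1}{1-s}$ and therefore holds only for finitely many $k$; your instinct about where the delicacy lies is correct, but the honest conclusion is that only a stretched-exponential rate can be extracted here, not $\tau^{k}$.)

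For part (i) the paper simply cites \cite[pp.~46, Lemma 6]{Polyak87}'s companion result and gives no proof, so your inductive ansatz $u_k\le Ck^{s-t}$ is a genuinely different, more self-contained route, and the verification of the induction step for $C>b/a$ and $k$ large is sound. However, it does not yet deliver the stated conclusion: establishing $u_k\le Ck^{s-t}$ for one fixed $C>b/a$ gives $O(k^{s-t})$ with the wrong constant, whereas $\tfrac{b}{a}k^{s-t}+o(k^{s-t})$ requires that for \emph{every} $\varepsilon>0$ the bound with $C=b/a+\varepsilon$ eventually hold, and for each such $C$ your induction needs a valid base case $u_{K_0(\varepsilon)}\le (b/a+\varepsilon)K_0(\varepsilon)^{s-t}$ at the $\varepsilon$-dependent threshold. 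Supplying that (for instance by a bootstrapping argument that drives the admissible constant down to $b/a$) is the missing piece of part (i), though it is a routine repair compared with the obstruction in part (ii).
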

\begin{proof}
Assertion (i) of this lemma is taken from \cite[pp. 46, Lemma 5]{Polyak87}, then it remains to prove assertion (ii). To this end, we derive by \eqref{eq-lem-new3} (when $t=s$) that, for each $k\in \IN$,
\begin{equation}\label{eq-lem-new4}
u_{k+1}-\frac{b}a\le \left(1-ak^{-s}\right) \left(u_k-\frac{b}a\right)\le \left(u_1-\frac{b}a\right)  \prod_{i=1}^{k} (1-ai^{-s}).
\end{equation}
Note that
\[
\prod_{i=1}^{k} (1-ai^{-s})=e^{\sum_{i=1}^{k}\ln(1-ai^{-s})} < e^{\int_{1}^{k+1}\ln(1-at^{-s}){\rm d}t} < e^{\int_{1}^{k+1}-at^{-s}{\rm d}t}\le e^{\frac{as}{1-s}} \tau^k,
\]
where $\tau:=e^{-a}\in (0,1)$. This, together with \eqref{eq-lem-new4}, yields the conclusion.
\end{proof}

\section{A unified framework for subgradient methods}
In the present paper, we discuss subgradient methods for solving the quasi-convex optimization problem \eqref{eq-QCO}, in which the set of minima and the minimum value are denoted by $X^*$ and $f^*$, respectively. The class of subgradient methods is one of the most popular numerical algorithms for solving constrained optimization problems. In view of the procedure of subgradient methods, the basic inequality of a subgradient iteration is an important property and plays as a key tool for convergence analysis of subgradient methods for either convex or quasi-convex optimization problems.

This section aims to investigate the iteration complexity and convergence rates of subgradient methods via a unified framework, in which a general (inexact) basic inequality is assumed to be satisfied. In particular, we fix $\epsilon\ge 0$ and $p\in (0,1]$, and consider a sequence $\{x_k\}\subseteq X$ that satisfies the following two conditions:
\begin{enumerate}
  \item[(H1)] For each $x^*\in X^*$ and each $k\in \{i\in \IN:f(x_i)>f^*+\epsilon\}$,
  \begin{equation}\label{eq-abs}
  \|x_{k+1}-x^*\|^2 - \|x_k-x^*\|^2 \le -\alpha_kv_k (f(x_k)-f^*-\epsilon)^{\frac1p}+ \beta_kv_k^2.
  \end{equation}
  \item[(H2)] $\{\alpha_k\}$ and $\{\beta_k\}$ are two sequences of positive scalars such that
  \begin{equation}\label{eq-abs-0}
  \lim_{k\to \infty}\alpha_k=\alpha>0\quad \mbox{and}\quad \lim_{k\to \infty}\beta_k=\beta>0.
  \end{equation}
\end{enumerate}

Condition ${\rm (H1)}$ measures the difference between two distances of iterates from a possible solution by calculating the difference between the function value and the minimum value with a noise, and condition ${\rm (H2)}$ characterizes some assumptions on the parameters. In the special case when $\epsilon=0$, conditions ${\rm (H1)}$ and ${\rm (H2)}$ are reduced to the unified framework studied in \cite{HuOpt18}, where the global convergence theorem was established, but no convergence rate analysis.
The nature of subgradient methods forces the generated sequence to comply with conditions ${\rm (H1)}$ and ${\rm (H2)}$ under some mild assumptions, and thus, this study provides a unified framework for various subgradient methods for either convex or quasi-convex optimization problems (one can also refer to \cite{HuOpt18} for details).
\begin{itemize}
  \item[-] For convex optimization problems and under a bounded subgradient assumption, condition ${\rm (H1)}$ with $p=1$ and ${\rm (H2)}$ are satisfied for the subgradient-type methods, including the standard subgradient method \cite{Shor85}, the approximate subgradient method \cite{Kiwiel04}, the primal-dual subgradient method \cite{Nedic09Saddle}, the incremental subgradient method \cite{Nedic01}, the conditional subgradient method \cite{LarssonPS96} and a unified framework of subgradient methods \cite{NetoPierro09};
  \item[-] For quasi-convex optimization problems and under the assumption of H\"{o}lder condition of order $p$, conditions ${\rm (H1)}$ and ${\rm (H2)}$ are satisfied for several types of subgradient methods, such as the standard subgradient method \cite{Kiwiel01}, the inexact subgradient method \cite{HuEJOR15}, the primal-dual subgradient method \cite{HuPAFA17} and the conditional subgradient method \cite{HuJNCA16b}.
\end{itemize}
\subsection{Convergence theorem}
This subsection aims to establish the convergence theorem of the sequence satisfying conditions ${\rm (H1)}$ and ${\rm (H2)}$ for some suitable stepsize rules $\{v_k\}$. The stepsize rule has a critical impact on the convergence behavior and numerical performance of subgradient methods. In the present paper, we consider three typical stepsize rules: (i) the constant stepsize rule is the most popular in applications but only guarantees the convergence to the optimal value/solution set within some tolerance; (ii) the diminishing stepsize rule guarantees the convergence to the exact optimal value/solution set but suffers a slow convergence rate; (iii) the dynamic stepsize rule enjoys the best convergence property but requires prior information of the approximate optimal value $f^*+\epsilon$; see \cite{Nedic01,HuIP17,HuOpt18} and references therein. Theorem \ref{thm-abs} extends \cite[Theorem 3.1]{HuOpt18} (considering the special case when $\epsilon=0$) to the inexact setting, while the skeleton of the proof is similar to that of \cite[Theorem 3.1]{HuOpt18}. To make this paper more self-contained, we provide a proof of the convergence theorem as follows. We write $X^*_\epsilon:=X\cap {\rm lev}_{\le f^*+\epsilon}f$ for the sake of simplicity, and particularly, $X^*_0=X^*$.
\begin{theorem}\label{thm-abs}
Let $\{x_k\}\subseteq X$ satisfy ${\rm (H1)}$ and ${\rm (H2)}$. Then, the following assertions are true.
\begin{enumerate}[{\rm (i)}]
  \item If $v_k\equiv v>0$, then $\liminf_{k\to \infty} f(x_k)\le f^*+\left(\frac{\beta v}{\alpha}\right)^p+\epsilon$.
  \item If $\{v_k\}$ is given by
    \begin{equation}\label{eq-dim+stepsize}
    v_k:= c k^{-s},\quad \mbox{where $c>0$, $s\in (0,1)$},
    \end{equation}
    then $\liminf_{k\to \infty} f(x_k)\le f^*+\epsilon$.
  \item If $\{v_k\}$ is given by
  \begin{equation}\label{eq-dyn+stepsize}
    v_k:= \frac{\alpha_k\lambda_k}{2\beta_k}[f(x_k)-f^*-\epsilon]_+^{\frac1p},\quad \mbox{where $0<\underline{\lambda}\le \lambda_k \le \overline{\lambda}<2$},
  \end{equation}
  then either $x_k\in X^*_\epsilon$ for some $k\in \IN$ or $\lim_{k\to \infty} f(x_k)\le f^*+\epsilon$.
\end{enumerate}
\end{theorem}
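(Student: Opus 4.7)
The plan is to exploit, throughout all three parts, the telescoped form of (H1): whenever $f(x_i)>f^*+\epsilon$ for $i=1,\dots,k$ and $x^*\in X^*$,
\begin{equation*}
\sum_{i=1}^{k}\alpha_i v_i \bigl(f(x_i)-f^*-\epsilon\bigr)^{1/p}
\;\le\; \|x_1-x^*\|^2 - \|x_{k+1}-x^*\|^2 + \sum_{i=1}^{k}\beta_i v_i^2 .
\end{equation*}
Combined with (H2), which gives $\alpha_k\ge \underline\alpha>0$ and $\beta_k\le\overline\beta<\infty$ for large $k$, this is the workhorse.

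For assertion (i), I argue by contradiction. Suppose $\liminf_{k\to\infty}f(x_k)>f^*+(\beta v/\alpha)^p+\epsilon$. Then there exist $\eta>0$ and $K$ such that $(f(x_k)-f^*-\epsilon)^{1/p}\ge (\beta v/\alpha)+\eta$ for all $k\ge K$, and in particular $f(x_k)>f^*+\epsilon$ so (H1) applies. Plugging the bound into (H1) with $v_k\equiv v$ yields, for $k\ge K$ large enough that $\alpha_k\ge\alpha-\eta'$ and $\beta_k\le \beta+\eta'$ with a sufficiently small $\eta'$,
\begin{equation*}
\|x_{k+1}-x^*\|^2-\|x_k-x^*\|^2 \;\le\; -\delta v
\end{equation*}
for some $\delta>0$. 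Summing over $k$ drives $\|x_{k+1}-x^*\|^2$ to $-\infty$, which is impossible. So the claimed $\liminf$ bound holds.

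For assertion (ii), I first check that if $f(x_k)\le f^*+\epsilon$ occurs for some $k$, we are done; otherwise (H1) applies at every index. I then apply Lemma 2.1 with weights $w_i=\alpha_iv_i$ and terms $a_i=(f(x_i)-f^*-\epsilon)^{1/p}$. Since $v_i=ci^{-s}$ with $s\in(0,1)$ and $\alpha_i\to\alpha>0$, we have $\sum w_i=\infty$, so Lemma 2.1 gives
\begin{equation*}
\liminf_{k\to\infty} a_k \;\le\; \liminf_{k\to\infty}\frac{\sum_{i=1}^{k}w_ia_i}{\sum_{i=1}^{k}w_i}
\;\le\; \liminf_{k\to\infty}\frac{\|x_1-x^*\|^2+\sum_{i=1}^{k}\beta_iv_i^2}{\sum_{i=1}^{k}\alpha_iv_i}.
\end{equation*}
The first summand on the right vanishes because the denominator diverges; for the ratio of the two sums, a second application of Lemma 2.1 (in its upper form, applied to $c_i=\beta_iv_i/\alpha_i\to0$ with weights $\alpha_iv_i$) shows that it tends to $0$. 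Hence $\liminf_k a_k\le 0$, which is exactly $\liminf_k f(x_k)\le f^*+\epsilon$.

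For assertion (iii), the route is the classical Fej\'er-style calculation. If $x_k\in X^*_\epsilon$ for some $k$ the first alternative already holds, so I assume $f(x_k)>f^*+\epsilon$ for all $k$ (then $v_k>0$). Substituting the dynamic stepsize into the right-hand side of (H1) and simplifying collapses both terms into multiples of $d_k^2:=(f(x_k)-f^*-\epsilon)^{2/p}$, giving
\begin{equation*}
\|x_{k+1}-x^*\|^2-\|x_k-x^*\|^2 \;\le\; -\frac{\alpha_k^2\lambda_k(2-\lambda_k)}{4\beta_k}\,d_k^2 .
\end{equation*}
Because $\underline\lambda\le\lambda_k\le\overline\lambda<2$ and $\alpha_k^2/\beta_k\to\alpha^2/\beta>0$, the coefficient of $d_k^2$ is bounded away from zero for large $k$. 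Thus $\{\|x_k-x^*\|^2\}$ is eventually nonincreasing and bounded below by $0$, hence convergent; consequently the left-hand side tends to $0$, forcing $d_k\to 0$ and therefore $\lim_{k\to\infty}f(x_k)=f^*+\epsilon$, which is stronger than the stated inequality.

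The main obstacle is assertion (ii) when $s\le 1/2$, where $\sum v_i^2$ diverges and the naive Robbins--Siegmund-type bound fails; the remedy is to pass through the averaging Lemma 2.1 twice so that only the ratio $v_k\to 0$ matters, rather than square-summability of the stepsizes.
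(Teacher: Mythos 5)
Your proof is correct, and for parts (ii) and (iii) it follows essentially the paper's route: telescope (H1), apply Lemma \ref{lem-averaging} with weights $\alpha_k v_k$ (twice, the second time to the ratio $\beta_k v_k/\alpha_k\to 0$) for (ii), and use the Fej\'er-type inequality $\|x_{k+1}-x^*\|^2-\|x_k-x^*\|^2\le -\frac{\alpha_k^2\lambda_k(2-\lambda_k)}{4\beta_k}(f(x_k)-f^*-\epsilon)^{2/p}$ for (iii). Part (i) is where you genuinely diverge: the paper again invokes Lemma \ref{lem-averaging}, computing $\lim_n \sum_k\beta_k v_k^2/\sum_k\alpha_k v_k=\beta v/\alpha$, whereas you run a contradiction argument with a uniform per-step decrease $-\delta v$. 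Both work; your version is more elementary (no second appeal to the averaging lemma) but requires the small $\eta,\eta'$ bookkeeping to verify that $-(\alpha-\eta')v(\beta v/\alpha+\eta)+(\beta+\eta')v^2$ is indeed bounded above by a negative constant, which checks out since it equals $-\alpha v\eta+O(\eta')$. One minor imprecision worth fixing in (ii): the right dichotomy is whether $f(x_k)\le f^*+\epsilon$ holds for \emph{infinitely many} $k$ (then $\liminf_k f(x_k)\le f^*+\epsilon$ is immediate) or only finitely many (then (H1) applies for all $k\ge N$ and the telescoping starts at $N$); a single index with $f(x_k)\le f^*+\epsilon$ does not bound the liminf. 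This is exactly how the paper phrases it, and the repair costs nothing.
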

\begin{proof}
Without loss of generality, we assume that $f(x_k)\le f^*+\epsilon$ only occurs for finitely many times; otherwise, assertions (i) and (ii) of this theorem hold automatically. That is, there exists $N\in \IN$ such that $f(x_k)>f^*+\epsilon$ for each $k\ge N$; consequently, letting $x^*\in X^*$, (H1) indicates that
\[
\|x_{k+1}-x^*\|^2 - \|x_k-x^*\|^2 \le -\alpha_kv_k (f(x_k)-f^*-\epsilon)^{\frac1p}+ \beta_kv_k^2
\]
for each $k\ge N$. Summing the above inequality over $k=N,\dots,n$, we have
\begin{equation*}
\|x_{n+1}-x^*\|^2-\|x_N-x^*\|^2\le -\sum_{k=N}^n \alpha_kv_k(f(x_k)-f^*-\epsilon)^{\frac{1}{p}}+\sum_{k=N}^n \beta_kv_k^2,
\end{equation*}
that is,
\begin{equation}\label{eq-abs-a1}
\frac{\sum_{k=N}^n \alpha_kv_k(f(x_k)-f^*-\epsilon)^{\frac{1}{p}}}{\sum_{k=N}^n\alpha_kv_k}\le \frac{\|x_N-x^*\|^2}{\sum_{k=N}^n\alpha_kv_k}+\frac{\sum_{k=N}^n\beta_kv_k^2}{\sum_{k=N}^n\alpha_kv_k}.
\end{equation}

(i) By the constant stepsize rule and \eqref{eq-abs-0}, one has $\lim_{n\to \infty} \sum_{k=N}^n \alpha_kv_k= \infty$. Then, by \eqref{eq-abs-a1}, Lemma \ref{lem-averaging} is applicable (with $(f(x_k)-f^*-\epsilon)^{\frac{1}{p}}$ and $\alpha_kv_k$ in place of $a_k$ and $w_k$) to concluding that
\begin{eqnarray}
\liminf_{k\to \infty} \,(f(x_k)-f^*-\epsilon)^{\frac{1}{p}}
&\le& \liminf_{n\to \infty}\, \frac{\sum_{k=N}^n \alpha_kv_k(f(x_k)-f^*-\epsilon)^{\frac{1}{p}}}{\sum_{k=N}^n\alpha_kv_k}\nonumber \\
&\le& \liminf_{n\to \infty}\, \left(\frac{\|x_N-x^*\|^2}{\sum_{k=N}^n\alpha_kv_k}+\frac{\sum_{k=N}^n\beta_kv_k^2}{\sum_{k=N}^n\alpha_kv_k}\right). \label{eq-abs-con-a}
\end{eqnarray}
Note by \eqref{eq-abs-0} and Lemma \ref{lem-averaging} that
\begin{equation*}
\lim_{n\to \infty}\, \frac{\|x_N-x^*\|^2}{\sum_{k=N}^n\alpha_kv_k} = 0 \quad \mbox{and} \quad \lim_{n\to \infty}\, \frac{\sum_{k=N}^n\beta_kv_k^2}{\sum_{k=N}^n\alpha_kv_k}=\frac{\beta v}{\alpha}.
\end{equation*}
This, together with \eqref{eq-abs-con-a}, shows that $\liminf_{k\to \infty} \,(f(x_k)-f^*-\epsilon)^{\frac{1}{p}}\le \frac{\beta v}{\alpha}$,
and hence assertion (i) of this theorem is proved.

(ii) By \eqref{eq-abs-0} and \eqref{eq-dim+stepsize}, one has $\lim_{n\to \infty} \sum_{k=N}^n \alpha_kv_k= \infty$; consequently, \eqref{eq-abs-con-a} holds. Note by \eqref{eq-abs-0}, \eqref{eq-dim+stepsize} and Lemma \ref{lem-averaging} that
\begin{equation*}
\lim_{n\to \infty}\, \frac{\|x_N-x^*\|^2}{\sum_{k=N}^n\alpha_kv_k} = 0 \quad \mbox{and} \quad \lim_{n\to \infty}\, \frac{\sum_{k=N}^n\beta_kv_k^2}{\sum_{k=N}^n\alpha_kv_k}=0.
\end{equation*}
This, together with \eqref{eq-abs-con-a}, yields that $\liminf_{k\to \infty} f(x_k)\le f^*+\epsilon$, as desired.

(iii) Without loss of generality, we assume that $f(x_k)> f^*+\epsilon$ for each $k\in \IN$; otherwise, assertion (iii) of this theorem follows. By \eqref{eq-abs-0}, there exists $N\in \IN$ such that
\begin{equation*}\label{eq-dimi-1an}
\beta_k<2\beta \quad \mbox{and} \quad \alpha_k>\frac{\alpha}2 \quad \mbox{for each } k\ge N.
\end{equation*}
Then, for each $x^*\in X^*$, it follows from \eqref{eq-abs} and \eqref{eq-dyn+stepsize} that, for each $k\ge N$,
\begin{eqnarray}
\|x_{k+1}-x^*\|^2-\|x_k-x^*\|^2
&\le& -\frac{\alpha_k^2}{4\beta_k}\lambda_k(2-\lambda_k)(f(x_k)-f^*-\epsilon)^{\frac{2}{p}}\nonumber \\
&\le& -\frac{\alpha^2}{32\beta}\underline{\lambda}(2-\overline{\lambda})(f(x_k)-f^*-\epsilon)^{\frac{2}{p}}. \label{eq-abs-dyn}
\end{eqnarray}
This shows that $\lim_{k\to \infty} f(x_k)\le f^*+\epsilon$; otherwise, it follows from \eqref{eq-abs-dyn} that there exists $\sigma>0$ such that $\|x_{k+1}-x^*\|^2\le \|x_{k}-x^*\|^2-\sigma$ for infinitely many $k\ge N$, which is impossible (as $\{\|x_k-x^*\|\}$ is nonnegative).
\end{proof}

\begin{remark}
It is worth noting that the conclusion of Theorem \ref{thm-abs}(ii) is also true for the general diminishing stepsize rule, i.e., satisfying
\begin{equation*}
v_k>0,\quad \lim_{k\to \infty} v_k=0,\quad \sum_{k=0}^{\infty} v_k=\infty.
\end{equation*}
\end{remark}

\subsection{Iteration complexity}
This subsection is devoted to the complexity issue of the sequence satisfying conditions ${\rm (H1)}$ and ${\rm (H2)}$ when using the typical stepsize rules. Given $\delta>0$, the iteration complexity of a particular algorithm is to estimate the number of iterations $K$ required by the algorithm to obtain an approximate solution, at which the function value is within $\delta$ of the optimal, i.e.,
\[
\min_{1\le k\le K} f(x_k)\le f^*+\delta.
\]
We write
\begin{equation*}\label{eq-alpha-inf}
\alpha_{\inf}:=\inf_{k\in \IN} \,\alpha_k\quad \mbox{and} \quad \beta_{\sup}:=\sup_{k\in \IN} \,\beta_k
\end{equation*}
for simplicity.
It is clear that $\alpha_{\inf}\in (0,+\infty)$ and $\beta_{\sup}\in (0,+\infty)$ under the assumption ${\rm (H2)}$.

\begin{theorem}\label{thm-ce-abs}
Let $\delta>0$, and let $\{x_k\}\subseteq X$ satisfy ${\rm (H1)}$ and ${\rm (H2)}$.
\begin{enumerate}[{\rm (i)}]
  \item Let $K_1:=\frac{\dist^2(x_1,X^*)}{\alpha_{\inf} v \delta}$ and $v_k\equiv v>0$. Then
  \[\min\limits_{1\le k\le K_1} f(x_k)\le f^*+\left(\frac{\beta_{\sup} }{\alpha_{\inf}}v+\delta\right)^p+\epsilon.\]
  \item Let $K_2:=\left(\frac{(1-s)\dist^2(x_1,X^*)}{\alpha_{\inf} c \delta}\right)^{\frac1{1-s}}$ and $\{v_k\}$ be given by \eqref{eq-dim+stepsize}. Then
  \[\min\limits_{1\le k\le K_2} f(x_k)\le f^*+\left(\frac{\beta_{\sup} }{\alpha_{\inf}}ck^{-s}+\delta\right)^p+\epsilon.\]
  \item Let $K_3:= \frac{4\beta_{\sup}\dist^2(x_1,X^*)}{\alpha_{\inf}^2\underline{\lambda}(2-\overline{\lambda})\delta^2}$ and $\{v_k\}$ be given by \eqref{eq-dyn+stepsize}. Then
      \[\min\limits_{1\le k\le K_3} f(x_k)\le f^*+\delta^p+\epsilon.\]
\end{enumerate}
\end{theorem}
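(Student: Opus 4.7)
The plan is to handle all three parts with a single telescoping argument driven by condition (H1). First I would dispose of the trivial case: if $f(x_k) \le f^*+\epsilon$ for some $k \in \{1,\dots,K_i\}$, every claim holds automatically, since each right-hand side only adds a nonnegative quantity to $f^*+\epsilon$. Otherwise (H1) is active at every such $k$; fixing $x^* \in X^*$ with $\|x_1-x^*\| = \dist(x_1,X^*)$, summing \eqref{eq-abs} from $k=1$ to $K_i$ and using $\|x_{K_i+1}-x^*\|^2 \ge 0$ yields, after inserting the uniform bounds $\alpha_k \ge \alpha_{\inf}$ and $\beta_k \le \beta_{\sup}$ from (H2) and factoring out the minimum on the left, the master estimate
\begin{equation*}
\min_{1\le k\le K_i}(f(x_k)-f^*-\epsilon)^{\frac{1}{p}} \le \frac{\dist^2(x_1,X^*)}{\alpha_{\inf}\sum_{k=1}^{K_i} v_k} + \frac{\beta_{\sup}}{\alpha_{\inf}}\cdot\frac{\sum_{k=1}^{K_i} v_k^2}{\sum_{k=1}^{K_i} v_k}.
\end{equation*}
Raising both sides to the $p$-th power (using $p\in(0,1]$ to keep the inequality in the right direction when combined with the additive $\delta$ term) then delivers the desired bound on $\min f(x_k)$.

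For part (i), the constant stepsize $v_k\equiv v$ gives $\sum v_k = K_1 v$ and $\sum v_k^2 = K_1 v^2$, so the prescribed $K_1$ is engineered precisely to force the first term on the right to equal $\delta$, while the second term reduces to $\beta_{\sup} v/\alpha_{\inf}$, giving (i) directly. For part (ii), with $v_k=ck^{-s}$, I would invoke the integral comparison $\sum_{k=1}^{K} k^{-s} \ge K^{1-s}/(1-s)$ to bound the first term by $\delta$ at $K=K_2$, and a parallel estimate on $\sum k^{-2s}$ versus $\sum k^{-s}$ to control the second term by a constant multiple of $K_2^{-s}$ (the $k^{-s}$ appearing in the printed statement I interpret as $K_2^{-s}$, since $k$ is otherwise already bound by the outer minimum).

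Part (iii) follows a different route. Substituting the dynamic choice $v_k = \frac{\alpha_k \lambda_k}{2\beta_k}(f(x_k)-f^*-\epsilon)^{1/p}$ into \eqref{eq-abs} produces, via a completed square analogous to the one used in the proof of Theorem \ref{thm-abs}(iii),
\[
\|x_{k+1}-x^*\|^2 \le \|x_k-x^*\|^2 - \frac{\alpha_k^2 \lambda_k(2-\lambda_k)}{4\beta_k}(f(x_k)-f^*-\epsilon)^{\frac{2}{p}}.
\]
Bounding $\alpha_k \ge \alpha_{\inf}$, $\beta_k \le \beta_{\sup}$, and $\lambda_k(2-\lambda_k) \ge \underline{\lambda}(2-\overline{\lambda})$, telescoping over $k=1,\dots,K_3$, and dividing through by $K_3$ yields
\[
\min_{1\le k\le K_3}(f(x_k)-f^*-\epsilon)^{\frac{2}{p}} \le \frac{4\beta_{\sup}\dist^2(x_1,X^*)}{\alpha_{\inf}^2\underline{\lambda}(2-\overline{\lambda})K_3} = \delta^2
\]
by the definition of $K_3$, and extracting a $p/2$-th root finishes the argument.

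The proof is essentially elementary once the master estimate is written down: parts (i) and (iii) collapse into direct arithmetic with the prescribed $K_i$, and the only genuine bookkeeping nuisance is part (ii), where the partial sums $\sum k^{-s}$ and $\sum k^{-2s}$ must be estimated with constants matching both $K_2$ and the stated second term. I do not foresee any conceptual obstacle beyond this careful tracking of constants.
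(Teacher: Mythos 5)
Your overall strategy is the contrapositive of the paper's: where the paper assumes the bound fails for all $k\le K_i$ and telescopes to contradict $\dist^2(x_{K_i+1},X^*)\ge 0$, you telescope first and read off a bound on the minimum. For parts (i) and (iii) these are the same computation in different packaging, and your arithmetic there is correct: in (i) the $\beta_{\sup}v^2$ term folds into the additive $\frac{\beta_{\sup}}{\alpha_{\inf}}v$ exactly as in the paper, and in (iii) the completed square $-\frac{\alpha_k^2\lambda_k(2-\lambda_k)}{4\beta_k}(f(x_k)-f^*-\epsilon)^{2/p}$ and the choice of $K_3$ give $\delta^2$ on the nose. (Both you and the paper silently treat $K_i$ as an integer; that is cosmetic.)

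Part (ii) is where your route genuinely breaks. The $k^{-s}$ in the stated bound is the \emph{iteration index}: the paper's contradiction hypothesis is $f(x_k)>f^*+(\frac{\beta_{\sup}}{\alpha_{\inf}}ck^{-s}+\delta)^p+\epsilon$ for each $k$, so that $\beta_{\sup}v_k^2$ cancels term by term against $\alpha_{\inf}v_k\cdot\frac{\beta_{\sup}}{\alpha_{\inf}}v_k$, leaving only $-\alpha_{\inf}c\delta k^{-s}$. Your master estimate decouples the two error terms and forces you to bound $\sum_{k\le K_2}v_k^2/\sum_{k\le K_2}v_k$ uniformly, and your reinterpretation of $k^{-s}$ as $K_2^{-s}$ is then not provable: the ratio $\sum_{k\le K}k^{-2s}/\sum_{k\le K}k^{-s}$ is of order $K^{-(1-s)}$ for $s>\frac12$ (and $K^{-1/2}\log K$ at $s=\frac12$), which is asymptotically \emph{larger} than $K^{-s}$ — consistent with the $\mathcal{O}(k^{-p\min\{s,1-s\}})$ rate in Remark \ref{rem-ci}, but not with a clean $K_2^{-s}$ second term. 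To recover (ii) you must keep the threshold $k$-dependent, i.e., run the argument on $(f(x_k)-f^*-\epsilon)^{1/p}-\frac{\beta_{\sup}}{\alpha_{\inf}}v_k$ rather than on $(f(x_k)-f^*-\epsilon)^{1/p}$ alone. A secondary point, which you share with the paper: the comparison $\sum_{k=1}^{K}k^{-s}\ge K^{1-s}/(1-s)$ is false (e.g.\ $K=1$, $s=\frac12$); the integral test only gives $\sum_{k=1}^{K}k^{-s}\ge\frac{(K+1)^{1-s}-1}{1-s}$, which is strictly smaller, so the stated $K_2$ is off by a (benign, fixable) constant.
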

\begin{proof}
(i) We prove by contradiction, assuming for each $1\le k\le K_1$ that
\[
f(x_k)> f^*+\left(\frac{\beta_{\sup}}{\alpha_{\inf}} v+\delta\right)^p+\epsilon;
\]
hence, by (H1), we obtain by \eqref{eq-abs} (with $\mathbb{P}_{X^*}(x^k)$ in place of $x^*$) that
\begin{equation*}
\dist^2(x_{k+1},X^*)< \dist^2(x_k,X^*) - \alpha_{\inf} v \left(\frac{\beta_{\sup}}{\alpha_{\inf}} v+\delta\right) + \beta_{\sup} v^2
= \dist^2(x_k,X^*) - \alpha_{\inf} v \delta.
\end{equation*}
Summing the above inequality over $k = 1,\dots,K_1$, we obtain that
\[
\dist^2(x_{K_1+1},X^*)< \dist^2(x_1,X^*) - K_1 \alpha_{\inf} v \delta,
\]
which yields a contradiction with the definition of $K_1$. Assertion (i) of this theorem is proved.

(ii) Proving by contradiction, we assume that
\[
f(x_k)> f^*+\left(\frac{\beta_{\sup} }{\alpha_{\inf}}ck^{-s}+\delta\right)^p+\epsilon \quad \mbox{for each $1\le k\le K_2$}.
\]
Then, we obtain by \eqref{eq-abs} and \eqref{eq-dim+stepsize} that
\begin{equation*}
\dist^2(x_{k+1},X^*)< \dist^2(x_k,X^*) - \alpha_{\inf} v_k \left(\frac{\beta_{\sup}}{\alpha_{\inf}} v_k+\delta\right) + \beta_{\sup} v_k^2
= \dist^2(x_k,X^*) - \alpha_{\inf} c \delta k^{-s},
\end{equation*}
and thus,
\begin{equation*}
\begin{array}{lll}
\dist^2(x_{K_2+1},X^*)&< \dist^2(x_1,X^*) - \alpha_{\inf} c \delta \sum_{k=1}^{K_2} k^{-s}\\
&\le \dist^2(x_1,X^*) - \alpha_{\inf} c \delta \int_1^{K_2+1} t^{-s} {\rm d} t\\
&= \dist^2(x_1,X^*) - \alpha_{\inf} \frac{c \delta}{1-s} ((K_2+1)^{1-s}-1),
\end{array}
\end{equation*}
which is negative by the definition of $K_2$. This contradiction yields assertion (ii) of this theorem.

(iii) Proving by contradiction, we assume that
\[
f(x_k)> f^*+\delta^p+\epsilon\quad \mbox{for each $1\le k\le K_3$}.
\]
Then, it follows from \eqref{eq-abs} and \eqref{eq-dyn+stepsize} that
\begin{eqnarray}
\dist^2(x_{k+1},X^*)
&\le& \dist^2(x_k,X^*) -\frac{\alpha_k^2}{4\beta_k}\lambda_k(2-\lambda_k)(f(x_k)-f^*-\epsilon)^{\frac{2}{p}}\nonumber \\
&<& \dist^2(x_k,X^*) -\frac{\alpha_{\inf}^2}{4\beta_{\sup}}\underline{\lambda}(2-\overline{\lambda})\delta^2. \nonumber
\end{eqnarray}
Summing the above inequality over $k = 1,\dots,K_3$, we derive that
\[
\dist^2(x_{K_3+1},X^*)< \dist^2(x_1,X^*) - K_3 \frac{\alpha_{\inf}^2}{4\beta_{\sup}}\underline{\lambda}(2-\overline{\lambda})\delta^2,
\]
which yields a contradiction with the definition of $K_3$. The proof is complete.
\end{proof}
\begin{remark}\label{rem-ci}
Theorem \ref{thm-ce-abs} shows that the sequence satisfying conditions (H1) and (H2) possesses the computational complexity of $\mathcal{O}(1/k^{p})$, $\mathcal{O}(1/k^{p\min\{s,1-s\}})$ and $\mathcal{O}(1/k^{\frac{p}2})$ to fall within a certain region (expressed by an additive form of the stepsize and noise) of the optimal value when the constant, diminishing or dynamic stepsize rules are used, respectively. 
In particular, in the cases of the diminishing stepsize rule, the optimal complexity is gained when $s=\frac12$, and thus, $v_k=c k^{-\frac12}$ is the best choice among the type of \eqref{eq-dim+stepsize}.
\end{remark}

\subsection{Convergence rate analysis}
The establishment of convergence rates is significant in guaranteeing the numerical performance of relevant algorithms. The aim of this section is to establish the convergence rates for the sequence satisfying conditions ${\rm (H1)}$ and ${\rm (H2)}$ under the assumption of weak sharp minima and/or some suitable assumptions on the noise.

The concept of weak sharp minima was introduced by Burke and Ferris \cite{BurkeF93}, and has been extensively studied and widely used to analyze
the convergence rates of many optimization algorithms; see \cite{Bolte2017,HuSIOPT16,HuIP17,Anthony2017} and references therein. One natural extension of this concept is the weak sharp minima of H\"{o}lderian order; see \cite{HuCOA2019,HuEJOR15,StudWard99,Anthony2017} and references therein.
\begin{definition}\label{def-WSM-p}
Let $f:\R^n\to \R$, $X\subseteq \R^n$ and $X^*:=\arg\min \{f(x):x\in X\}$. Let $x^*\in X^*$, $S\subseteq \R^n$, $\eta>0$ and $q\ge 1$. $X^*$ is said to be
\begin{enumerate}[{\rm (a)}]
  \item the set of weak sharp minima of order $q$ for $f$ on $S$ over $X$ with modulus $\eta$ if
        \begin{equation*}\label{eq-WSM-p}
        f(y)-f(x^*)\ge \eta\, {\rm dist}^q(y,X^*)\quad {\rm for~each}~ y\in S\cap X;
        \end{equation*}
  \item the set of (global) weak sharp minima of order $q$ for $f$ over $X$ with modulus $\eta$ if $X^*$ is the set of weak sharp minima of order $q$ for $f$ on $\R^n$ over $X$ with modulus $\eta$;
  \item the set of boundedly weak sharp minima of order $q$ for $f$ over $X$ if, for each $r>0$ such that $X^*\cap \mathbf{B}(0, r)\neq \emptyset$, there exists $\eta_r>0$ such that $X^*$ is the set of weak sharp minima of order $q$ for $f$ on $\mathbf{B}(0, r)$ over $X$ with modulus $\eta_r$.
\end{enumerate}
\end{definition}


\begin{remark}\label{rem-wsm}
It is clear that the global weak sharp minima of order $q$ implies the boundedly weak sharp minima of order $q$. The larger the $q$, the less restrictive the global (resp. boundedly) weak sharp minima of order $q$. In particular, when $q=1$, this concept is reduced to the global (resp. boundedly) weak sharp minima; see, e.g., \cite{BurkeDeng02,BurkeF93}.
\end{remark}

The following theorems present the linear (or sublinear) convergence rates of the sequence $\{x_k\}$ satisfying ${\rm (H1)}$ and ${\rm (H2)}$ to a certain neighborhood of the optimal solution set when using different stepsize rules and under the assumption of boundedly weak sharp minima of H\"{o}lderian order. To this end, we further require the following condition to ensure the bounded property of $\{x_k\}$ when the constant or diminishing stepsize rule is adopted.
\begin{enumerate}
  \item[(H3)] For each $k\in \IN$,
  \begin{equation}\label{eq-abs-1}
    \|x_{k+1}-x_k\|\le \gamma_kv_k,
    \end{equation}
    where $\{\gamma_k\}$ is a sequence of positive scalars such that $\lim_{k\to \infty}\gamma_k=\gamma>0$.
\end{enumerate}
Condition (H3) characterizes an upper bound (related to the stepsize) of the distance between the successive two iterates, which is always satisfied for various subgradient methods for either convex or quasi-convex optimization problems; see, e.g., Section 4 and \cite{HuOpt18}.

\begin{theorem}\label{thm-abs-con}
Let $\{x_k\}\subseteq X$ satisfy ${\rm (H1)}$-${\rm (H3)}$ and $v_k\equiv v>0$. Suppose that $f$ is coercive and that $X^*$ is the set of boundedly weak sharp minima of order $q$ for $f$ over $X$ with modulus $\eta$. Then, the following assertions are true.
\begin{enumerate}[{\rm (i)}]
  \item If $q= 2p$, then either $x_k\in X^*_\epsilon$ for some $k\in \IN$ or there exist $\tau\in [0,1)$ and $N\in \IN$ such that
    \begin{equation*}\label{eq-cr-con2}
    \dist^2(x_{k+N},X^*)\le \tau^k \dist^2(x_N,X^*) +2^{\frac1p-1}\eta^{-\frac1{p}}\left(\epsilon^{\frac1p}+ \frac{\beta v}{\alpha}\right)\quad \mbox{for each } k\in \IN.
    \end{equation*}
  \item If $q>2p$ and $\epsilon^{\frac1p}+ \frac{\beta v}{\alpha}<\eta^{-\frac{2}{q-2p}}(\frac{2p}{\alpha vq})^{\frac{q}{q-2p}}$, then either $x_k\in X^*_\epsilon$ for some $k\in \IN$ or there exist $\tau\in (0,1)$ and $N\in \IN$ such that
    \begin{equation*}\label{eq-cr-con2a}
    \dist^2(x_{k+N},X^*)\le \tau^k \dist^2(x_N,X^*) +2^{\frac2q-\frac{2p}q}{\eta}^{-\frac{2}q}\left(\epsilon^{\frac1p}+ \frac{\beta v}{\alpha}\right)^{\frac{2p}q}\quad \mbox{for each } k\in \IN.
    \end{equation*}
\end{enumerate}
\end{theorem}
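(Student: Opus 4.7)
My plan is to reduce to the interesting case and then distil the problem to a scalar recursion for $D_k:=\dist^2(x_k,X^*)$ to which a linear (case $q=2p$) or sublinear (case $q>2p$, via Lemma \ref{lem-new}(ii)) argument applies. Since the conclusion is an ``either/or'', I may assume $x_k\notin X^*_\epsilon$ for every $k$; equivalently $f(x_k)>f^*+\epsilon$ throughout and (H1) is active at every iterate. The first step is to establish uniform boundedness of $\{x_k\}$, which is needed because the weak sharp minima hypothesis is only \emph{bounded}. By coercivity of $f$, I would pick $M>0$ so large that $\|x\|>M$ forces $f(x)\ge f^*+\epsilon+(\beta_{\sup}v/\alpha_{\inf})^{p}$; then for $\|x_k\|>M$ inequality (H1) at any $x^*\in X^*$ collapses to $\|x_{k+1}-x^*\|^2\le\|x_k-x^*\|^2$, while for $\|x_k\|\le M$ condition (H3) gives $\|x_{k+1}\|\le M+\gamma_{\sup}v$. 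A short induction then yields $\{x_k\}\subset\mathbb{B}(0,r)$ for some $r>0$, whence the hypothesis supplies $\eta:=\eta_r>0$ with $f(x_k)-f^*\ge \eta\,\dist^q(x_k,X^*)$ for every $k$.

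\medskip

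Next I would exploit the elementary inequality
\[
(a-b)^{1/p}\ \ge\ 2^{1-1/p}\,a^{1/p}-b^{1/p}\qquad\text{for }a\ge b\ge 0,
\]
which is just the midpoint Jensen inequality for the convex map $t\mapsto t^{1/p}$ (valid since $1/p\ge 1$). Applying it with $a=f(x_k)-f^*$ and $b=\epsilon$ and inserting the weak sharp minima bound produces
\[
(f(x_k)-f^*-\epsilon)^{1/p}\ \ge\ 2^{1-1/p}\eta^{1/p}D_k^{q/(2p)}-\epsilon^{1/p},
\]
which remains a valid lower bound even when the right-hand side is negative (the left is nonnegative). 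Substituting into (H1) at $x^*=\mathbb{P}_{X^*}(x_k)$ then produces the master recursion
\[
D_{k+1}\ \le\ D_k\,-\,2^{1-1/p}\alpha_k v\,\eta^{1/p}\,D_k^{q/(2p)}\,+\,\alpha_k v\,\epsilon^{1/p}\,+\,\beta_k v^2.
\]
Using (H2), I would then fix $N\in\IN$ so that $\alpha_k$ and $\beta_k$ sit within any prescribed tolerance of $\alpha$ and $\beta$ for $k\ge N$, effectively freezing the coefficients at their limits.

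\medskip

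Finally I would separate the two cases. For (i), $q=2p$ makes the exponent $q/(2p)$ equal to $1$, so the master recursion becomes a linear affine contraction $D_{k+1}\le\tau\,D_k+c$ with contraction factor $\tau:=1-2^{1-1/p}\alpha v\,\eta^{1/p}\in[0,1)$ and constant $c:=\alpha v\,\epsilon^{1/p}+\beta v^2$; iterating $D_{k+N}-D_\infty\le\tau^k(D_N-D_\infty)$ with fixed point $D_\infty=c/(1-\tau)$ gives, after algebraic simplification of $D_\infty$ to $2^{1/p-1}\eta^{-1/p}(\epsilon^{1/p}+\beta v/\alpha)$, exactly the claimed bound. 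For (ii), $q>2p$, I would set $r:=q/(2p)-1>0$ and invoke Lemma \ref{lem-new}(ii) with $u_k=D_k$, $a=2^{1-1/p}\alpha v\,\eta^{1/p}$, $b=\alpha v\,\epsilon^{1/p}+\beta v^2$; the smallness hypothesis on $\epsilon^{1/p}+\beta v/\alpha$ is tailored precisely to yield $b<a^{-1/r}(1+r)^{-(1+r)/r}$, and a direct computation reduces $(b/a)^{1/(1+r)}$ to $2^{(2-2p)/q}\eta^{-2/q}(\epsilon^{1/p}+\beta v/\alpha)^{2p/q}$, matching the stated radius. The two delicate points are (a) the uniform boundedness step, whose nontriviality stems from the \emph{conditional} nature of the descent in (H1) (which forces joint use of (H3) and coercivity), and (b) extracting the correct universal constants $2^{1/p-1}$ and $2^{(2-2p)/q}$; both depend on the midpoint-convexity inequality above, since the naive split $a^{1/p}-b^{1/p}$ is in fact an \emph{upper} bound on $(a-b)^{1/p}$, not a lower one.
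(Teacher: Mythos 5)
Your proposal is correct and follows essentially the same route as the paper: reduce to the branch where $f(x_k)>f^*+\epsilon$ for all $k$, establish boundedness from coercivity together with ${\rm (H3)}$, invoke the boundedly weak sharp minima on a ball containing the iterates, apply the splitting inequality $(a-b)^{1/p}\ge 2^{1-1/p}a^{1/p}-b^{1/p}$ to reach the recursion in $\dist^2(x_k,X^*)$, and conclude by an affine contraction when $q=2p$ and by Lemma \ref{lem-new}(ii) when $q>2p$. The only (immaterial) difference is that you run the boundedness induction with a norm threshold $M$ from coercivity, whereas the paper uses the level set $X^*_\sigma$ and its deviation $\rho(\sigma)$ together with Theorem \ref{thm-abs}(i); both arguments rest on the same dichotomy and yield the same bound.
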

\begin{proof}
Without loss of generality, we assume that $f(x_k)> f^*+\epsilon$ for each $k\in \IN$; otherwise, this theorem holds automatically. Consequently,  (H1) says that, for each $x^* \in X^*$ and $k\in \IN$,
\begin{equation}\label{eq-pro-con6a}
\|x_{k+1}-x^*\|^2 \le  \|x_k-x^*\|^2  -\alpha_kv (f(x_k)-f^*-\epsilon)^{\frac1p}+ \beta_kv^2.
\end{equation}

We first claim that $\{x_k\}$ is bounded. To this end, we fix $\kappa>1$ and define
\begin{equation}\label{eq-pro-con6}
\sigma:=\kappa\left(\frac{\beta v}{\alpha}\right)^p+\epsilon,\quad X^*_\sigma:=X \cap \mbox{lev}_{\le f^*+\sigma}f \quad \mbox{and}\quad \rho(\sigma):=\max \limits_{x \in X^*_\sigma}\dist (x,X^*).
\end{equation}
By the coercive assumption, it follows that its sublevel set $\mbox{lev}_{\le f^*+\sigma}f$ is bounded, and so is $X^*_\sigma$. Then, one has by \eqref{eq-pro-con6} that $\rho(\sigma)<\infty$. By ${\rm (H2)}$ and ${\rm (H3)}$, there exists $N\in \IN$ such that
\begin{equation}\label{eq-pro-con1}
\alpha\kappa^{-\frac1{2p}}<\alpha_k <\alpha\kappa^{\frac1{2p}}, \quad \beta\kappa^{-\frac1{2p}}<\beta_k < \beta\kappa^{\frac1{2p}}  \quad  \mbox{and} \quad  \gamma_k < 2\gamma \quad \mbox{for each } k\ge N.
\end{equation}
Fix $k\ge N$. Below, we show
\begin{equation}\label{eq-pro-con3}
\dist(x_{k+1},X^*) < \max \{\dist(x_k,X^*),\rho(\sigma)+2\gamma v\}
\end{equation}
by claiming the following two implications:
\begin{equation}\label{eq-pro-con4}
[f(x_k)> f^*+\sigma] \quad \Rightarrow \quad [\dist(x_{k+1},X^*)<\dist(x_k,X^*)];
\end{equation}
\begin{equation}\label{eq-pro-con5}
[f(x_k)\le f^*+\sigma] \quad \Rightarrow \quad [\dist(x_{k+1},X^*)<\rho(\sigma)+2\gamma v].
\end{equation}
To prove \eqref{eq-pro-con4}, we suppose that $f(x_k)> f^*+\sigma$. Then, for each $x^* \in X^*$, one has by \eqref{eq-pro-con6a} that
\[
\| x_{k+1}-x^* \|^2 < \| x_k-x^* \|^2-\alpha_k v (\sigma-\epsilon)^{\frac1p}+\beta_k v^2 < \| x_k -x^* \|^2
\]
(due to \eqref{eq-pro-con6} and \eqref{eq-pro-con1}). Consequently, \eqref{eq-pro-con4} can be proved by selecting $x^* =\mathbb{P}_{X^*}(x_k)$.
To show \eqref{eq-pro-con5}, we assume that $f(x_k)\le f^*+\sigma$. Then $x_k \in X^*_\sigma$, and so \eqref{eq-pro-con6} says that $\dist(x_k,X^*) \le \rho(\sigma)$. This, together with \eqref{eq-abs-1} and \eqref{eq-pro-con1}, shows \eqref{eq-pro-con5}. Therefore, \eqref{eq-pro-con3} is proved, as desired.

Note by Theorem \ref{thm-abs}(i) that $\liminf_{k \to \infty} f(x_k)\le f^*+(\frac{\beta v}{\alpha})^p+\epsilon$, and note by $\kappa>1$ and \eqref{eq-pro-con6} that $\sigma>(\frac{\beta v}{\alpha})^p+\epsilon$. Then, we can assume, without loss of generality, that $f(x_{N})\le f^* + \sigma$ (otherwise, we can choose a larger $N$). Consequently, we have by \eqref{eq-pro-con5} that $\dist(x_{N+1},X^*)<\rho(\sigma)+2\gamma v$, and inductively obtain by \eqref{eq-pro-con3} that
\begin{equation}\label{eq-pro-con4ab}
\dist(x_k,X^*)<\rho(\sigma)+2\gamma v \quad \mbox{for each } k> N.
\end{equation}
Hence, $\{x_k\}$ is proved to be bounded (since $X^*\subseteq X^*_{\sigma}$ is bounded), as desired.
That is, there exists $r>0$ such that $X^*\subseteq \mathbf{B}(0,r)$ and $\{x_k\}\subseteq \mathbf{B}(0,r)$ for each $k\in \IN$. Then, by assumption of boundedly weak sharp minima property of order $q$, there exists $\eta>0$ such that
\begin{equation}\label{eq-pro-con7}
f(x_k)-f^* \ge \eta\, \dist^{q}(x_k,X^*)\quad \mbox{for each } k\in \IN.
\end{equation}
Selecting $x^* =\mathbb{P}_{X^*}(x_k)$, we deduce by \eqref{eq-pro-con6a} and \eqref{eq-pro-con1} that, for each $k\ge N$,
\begin{equation}\label{eq-pro-con6b}
\begin{array}{lll}
&\dist^2(x_{k+1},X^*)\\
&\le \dist^2(x_k,X^*)-\alpha v\kappa^{-\frac1{2p}} (f(x_k)-f^*-\epsilon)^{\frac1p}+ \beta\kappa^{\frac1{2p}} v^2\\
&\le \dist^2(x_k,X^*)-2^{1-\frac1p} \alpha v\kappa^{-\frac1{2p}} (f(x_k)-f^*)^{\frac1p}+\alpha v\kappa^{-\frac1{2p}}\epsilon^{\frac1p}+ \beta\kappa^{\frac1{2p}} v^2,
\end{array}
\end{equation}
where the last inequality holds because
\begin{equation}\label{eq-lpnorm}
(a-b)^\gamma\ge 2^{1-\gamma}a^\gamma-b^\gamma \quad \mbox{whenever } a\ge b\ge 0 \mbox{ and  } \gamma\ge 1
\end{equation}
(cf. \cite[Lemma 4.1]{HuangYang03}).
Below, we prove this theorem in the following two cases.

(i) Suppose that $q= 2p$. Setting $\tau:= (1-2^{1-\frac1p}\alpha v\kappa^{-\frac1{2p}} \eta^{\frac1p})_+\in [0,1)$ and substituting \eqref{eq-pro-con7} into \eqref{eq-pro-con6b}, we achieve that
\[
\dist^2(x_{k+1},X^*)\le \tau \dist^2(x_k,X^*)+\alpha v\kappa^{-\frac1{2p}}\epsilon^{\frac1p}+ \beta\kappa^{\frac1{2p}} v^2\quad \mbox{for each }  k\ge N.
\]
Then, we inductively obtain that
\[
\dist^2(x_{k+N},X^*)\le \tau^k \dist^2(x_N,X^*) +\frac1{1-\tau}(\alpha v\kappa^{-\frac1{2p}}\epsilon^{\frac1p}+ \beta\kappa^{\frac1{2p}} v^2)\quad \mbox{for each } k\in \IN;
\]
consequently, the conclusion follows (noting that $\kappa>1$ is arbitrary).

(ii) Suppose that $q> 2p$ and $\alpha \epsilon^{\frac1p}+\beta v<(\alpha^p \eta)^{-\frac{2}{q-2p}}(v\frac{q}{2p})^{-\frac{q}{q-2p}}$. We obtain by \eqref{eq-pro-con7} and \eqref{eq-pro-con6b} that, for each  $k\ge N$,
\[
\dist^2(x_{k+1},X^*)\le \dist^2(x_k,X^*)-2^{1-\frac1p}\alpha v\kappa^{-\frac1{2p}} \eta^{\frac1p}\dist^{\frac{q}p}(x_k,X^*)+\alpha v\kappa^{-\frac1{2p}}\epsilon^{\frac1p}+ \beta\kappa^{\frac1{2p}} v^2.
\]
Then, Lemma \ref{lem-new}(ii) is applicable (with $\dist^2(x_k,X^*)$, $\frac{q}{2p}-1$, $2^{1-\frac1p}\alpha v\kappa^{-\frac1{2p}} \eta^{\frac1p}$, $\alpha v\kappa^{-\frac1{2p}}\epsilon^{\frac1p}+ \beta\kappa^{\frac1{2p}} v^2$ in place of $u_k$, $r$, $a$, $b$) to obtaining the conclusion.
\end{proof}

\begin{theorem}\label{thm-abs-dyn}
Let $\{x_k\}\subseteq X$ satisfy ${\rm (H1)}$ and ${\rm (H2)}$, and $\{v_k\}$ be given by \eqref{eq-dyn+stepsize}.  Suppose that $X^*$ is the set of boundedly weak sharp minima of order $q$ for $f$ over $X$ with modulus $\eta$. Then, the following assertions are true.
\begin{enumerate}[{\rm (i)}]
  \item If $q= p$ and $\epsilon\ge 0$, then either $x_k\in X^*_\epsilon$ for some $k\in \IN$ or there exist $\tau\in [0,1)$ and $N\in \IN$ such that
    \begin{equation}\label{eq-cr-dyn2}
    \dist^2(x_{k+N},X^*)\le \tau^k \dist^2(x_N,X^*) +2^{\frac2p-1}\eta^{-\frac2{p}}\epsilon^{\frac2p} \quad \mbox{for each }  k\in \IN.
    \end{equation}
    \item If $q>p$ and $\epsilon= 0$, then either $x_k\in X^*$ for some $k\in \IN$ or there exist $\gamma>0$ and $N\in \IN$ such that
      \begin{equation}\label{eq-cr-dyn2b}
      \dist^2(x_{k+N},X^*)\le \frac{\dist^2(x_N,X^*)}{(1+\gamma k)^{\frac{p}{q-p}}}\quad \mbox{for each } k\in \IN.
      \end{equation}
  \item If $q>p$ and $0<\epsilon<\left(\frac{\alpha^2q}{4\beta p}\underline{\lambda}(2-\overline{\lambda})\right)^{-\frac{pq}{2(q-p)}}\eta^{-\frac{p}{q-p}}$, then either $x_k\in X^*_\epsilon$ for some $k\in \IN$ or there exist $\tau\in (0,1)$ and $N\in \IN$ such that
    \begin{equation}\label{eq-cr-dyn2a}
    \dist^2(x_{k+N},X^*)\le \tau^k \dist^2(x_N,X^*) +2^{\frac2q-\frac{p}q}\eta^{-\frac2q} \epsilon^{\frac{2}q}\quad \mbox{for each } k\in \IN.
    \end{equation}
\end{enumerate}
\end{theorem}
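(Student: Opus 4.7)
The plan is to start from the dynamic-stepsize form of the basic inequality and extract a scalar recursion on $u_k:=\mathrm{dist}^2(x_k,X^*)$, then read off each rate from Lemma \ref{lem-new} in the appropriate regime. Substituting \eqref{eq-dyn+stepsize} into \eqref{eq-abs} (as in the derivation of \eqref{eq-abs-dyn}) gives, whenever $f(x_k)>f^*+\epsilon$,
\[
\|x_{k+1}-x^*\|^2-\|x_k-x^*\|^2\le-\frac{\alpha_k^2\lambda_k(2-\lambda_k)}{4\beta_k}(f(x_k)-f^*-\epsilon)^{2/p}.
\]
In particular $\|x_k-x^*\|$ is eventually nonincreasing for every $x^*\in X^*$, so $\{x_k\}$ is automatically bounded; this explains why condition (H3) is absent from the hypotheses. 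Boundedness activates the boundedly weak-sharp-minima assumption, producing $\eta>0$ with $f(x_k)-f^*\ge\eta\,u_k^{q/2}$ for all $k$.

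Next, I would set $x^*=\mathbb{P}_{X^*}(x_k)$ and absorb the parameter variation via (H2): for any $\kappa>1$ there is $N\in\IN$ such that for $k\ge N$ the coefficient $\tfrac{\alpha_k^2\lambda_k(2-\lambda_k)}{4\beta_k}$ is bounded below by $c:=\kappa^{-1}\tfrac{\alpha^2}{4\beta}\underline{\lambda}(2-\overline{\lambda})$. Applying the elementary inequality \eqref{eq-lpnorm} with $\gamma=2/p\ge 2$ to split off the $\epsilon$-term, and then the weak-sharp-minima bound, yields the master recursion
\[
u_{k+1}\le u_k-a\,u_k^{q/p}+b,\qquad a=c\cdot 2^{1-2/p}\eta^{2/p},\qquad b=c\,\epsilon^{2/p},\qquad k\ge N.
\]
Part (i) is the case $q=p$, so $u_k^{q/p}=u_k$ and the recursion reads $u_{k+1}\le(1-a)u_k+b$. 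Iterating from $k=N$ and noting that the constants $c$ cancel in the ratio $b/a=2^{2/p-1}\eta^{-2/p}\epsilon^{2/p}$ delivers \eqref{eq-cr-dyn2} with $\tau:=(1-a)_+\in[0,1)$. Part (ii) is the case $q>p$, $\epsilon=0$, hence $b=0$; writing the exponent as $1+r$ with $r=(q-p)/p>0$, Lemma \ref{lem-new}(i) applies and its conclusion $u_{k+N}\le u_N(1+rau_N^r k)^{-1/r}$ is exactly \eqref{eq-cr-dyn2b} with $\gamma=rau_N^r$ and $1/r=p/(q-p)$.

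Part (iii) is the most delicate: I would invoke Lemma \ref{lem-new}(ii) with the same $r=(q-p)/p$. Its hypothesis $0<b<a^{-1/r}(1+r)^{-(1+r)/r}$, after substituting $1/r=p/(q-p)$, $(1+r)/r=q/(q-p)$, and the expressions for $a,b$, reduces (on letting $\kappa\downarrow 1$) to precisely the stated upper bound $\epsilon<\bigl(\tfrac{\alpha^2 q}{4\beta p}\underline{\lambda}(2-\overline{\lambda})\bigr)^{-pq/(2(q-p))}\eta^{-p/(q-p)}$. The lemma then yields $u_{k+N}\le u_N\tau^k+(b/a)^{1/(1+r)}$, and the residual simplifies as $(b/a)^{p/q}=2^{2/q-p/q}\eta^{-2/q}\epsilon^{2/q}$, which is \eqref{eq-cr-dyn2a}.

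I expect the main obstacle to be the bookkeeping in part (iii): verifying that the abstract threshold of Lemma \ref{lem-new}(ii) matches the explicit condition on $\epsilon$ in the theorem requires carefully combining the exponent identities $-p/(q-p)$ and $-q/(q-p)$ with the $2^{1-2/p}$ factor produced by \eqref{eq-lpnorm}, and then confirming that the auxiliary slack $\kappa>1$ can indeed be sent to $1$. In the other two parts the arithmetic is lighter because either the constant $c$ cancels entirely (part (i)) or the forcing term vanishes (part (ii)).
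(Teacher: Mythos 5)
Your proposal follows essentially the same route as the paper: the same dynamic-stepsize basic inequality, the same boundedness argument via monotonicity of $\|x_k-x^*\|$, the same splitting of the $\epsilon$-term through $(a-b)^\gamma\ge 2^{1-\gamma}a^\gamma-b^\gamma$, and the same reduction to Lemma \ref{lem-new} with $r=q/p-1$ in parts (ii) and (iii); the arithmetic for the residuals $b/a$ and $(b/a)^{p/q}$ checks out. The only (harmless) imprecision is in part (iii), where the threshold coming from Lemma \ref{lem-new}(ii) is in fact the stated bound multiplied by $2^{p(2-p)/(2(q-p))}\ge 1$, so the theorem's condition on $\epsilon$ is sufficient rather than ``precisely'' the lemma's hypothesis.
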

\begin{proof}
Without loss of generality, we assume that $f(x_k)> f^*+\epsilon$ for each $k\in \IN$; otherwise, this theorem holds automatically.
Combining \eqref{eq-abs} with \eqref{eq-dyn+stepsize}, we obtain that there exists $N\in \IN$ such that, for each $k\ge N$,
\begin{eqnarray}
& & \dist^2(x_{k+1},X^*)-\dist^2(x_{k},X^*) \nonumber \\
& &\le -\frac{\alpha_k^2}{4\beta_k}\lambda_k(2-\lambda_k)(f(x_k)-f^*-\epsilon)^{\frac2p} \label{eq-dyn-ep} \\
& &\le -2^{1-\frac2p}\kappa^{-\frac3{2p}}\frac{\alpha^2}{4\beta}\underline{\lambda}(2-\overline{\lambda})(f(x_k)-f^*)^{\frac2p}
+\kappa^{-\frac3{2p}}\frac{\alpha^2}{4\beta}\underline{\lambda}(2-\overline{\lambda})\epsilon^{\frac2p} \nonumber
\end{eqnarray}
(due to  \eqref{eq-pro-con1} and \eqref{eq-lpnorm}). It follows from the proof of Theorem \ref{thm-abs}(iii) (cf. \eqref{eq-abs-dyn}) that $\{x_k\}$ is bounded. Then, there exists $r>0$ such that $X^*\cap \mathbf{B}(0,r)\neq \emptyset$ and $\{x_k\}\subseteq \mathbf{B}(0,r)$ for each $k\in \IN$. By the assumption of boundedly weak sharp minima property of order $q$, there exists $\eta>0$ such that
\begin{equation}\label{eq-dyn-wsm1}
f(x_k)-f^*\ge \eta\,\dist^q(x_k,X^*)\quad \mbox{for each } k\in \IN.
\end{equation}

(i) Suppose that $q= p$. Setting $\tau:= (1-2^{1-\frac2p}\kappa^{-\frac3{2p}}\frac{\alpha^2}{4\beta}\underline{\lambda}(2-\overline{\lambda})\eta^{\frac2p})_+\in [0,1)$, we obtain by substituting \eqref{eq-dyn-wsm1} into \eqref{eq-dyn-ep} that
\[
\dist^2(x_{k+1},X^*)\le \tau\dist^2 (x_k,X^*)+\kappa^{-\frac3{2p}}\frac{\alpha^2}{4\beta}\underline{\lambda}(2-\overline{\lambda})\epsilon^{\frac2p}\quad \mbox{for each } k\ge N.
\]
Then, we inductively obtain
\[
\dist^2(x_{k+N},X^*)\le \tau^k \dist^2(x_N,X^*) +\frac1{1-\tau}\kappa^{-\frac3{2p}}\frac{\alpha^2}{4\beta}\underline{\lambda}(2-\overline{\lambda})\epsilon^{\frac2p} \quad \mbox{for each } k\in \IN;
\]
hence, \eqref{eq-cr-dyn2} holds (noting that $\kappa>1$ is arbitrary), and assertion (i) is proved.

(ii) Suppose that $q>p$ and $\epsilon= 0$. Then, combining \eqref{eq-dyn-ep} and  \eqref{eq-dyn-wsm1} implies that, for each $k\ge N$,
\[
\dist^2(x_{k+1},X^*)\le \dist^2(x_k,X^*)
-2^{1-\frac2p}\kappa^{-\frac3{2p}}\frac{\alpha^2}{4\beta}\underline{\lambda}(2-\overline{\lambda})\eta^{\frac2p}\dist^{\frac{2q}p}(x_k,X^*).
\]
Then, Lemma \ref{lem-new}(i) is applicable (with $\dist^2(x_k,X^*)$, $2^{1-\frac2p}\kappa^{-\frac3{2p}}\frac{\alpha^2}{4\beta}\underline{\lambda}(2-\overline{\lambda})\eta^{\frac2p}$, $\frac{q}p-1$ in place of $u_k$, $a$, $r$) to concluding that \eqref{eq-cr-dyn2b} holds
with $\gamma:=2^{1-\frac2p}\kappa^{-\frac3{2p}}\frac{\alpha^2(q-p)}{4\beta p}\underline{\lambda}(2-\overline{\lambda})\eta^{\frac2p}
\dist^{\frac{2q}{p}-2}(x_N,X^*)$.

(iii) Suppose that $q>p$ and $0<\epsilon<\left(\frac{\alpha^2q}{4\beta p}\underline{\lambda}(2-\overline{\lambda})\right)^{-\frac{pq}{2(q-p)}}\eta^{-\frac{p}{q-p}}$. We obtain by \eqref{eq-dyn-ep} and \eqref{eq-dyn-wsm1} that, for each  $k\ge N$,
\[
\dist^2(x_{k+1},X^*)\le \dist^2(x_k,X^*)-2^{1-\frac2p}\kappa^{-\frac3{2p}}\frac{\alpha^2}{4\beta}\underline{\lambda}(2-\overline{\lambda})\eta^{\frac2p}
\dist^{\frac{2q}p}(x_k,X^*)+\kappa^{-\frac3{2p}}\frac{\alpha^2}{4\beta}\underline{\lambda}(2-\overline{\lambda})\epsilon^{\frac2p}.
\]
Then Lemma \ref{lem-new}(ii) is applicable (with $\dist^2(x_k,X^*)$, $\frac{q}{p}-1$, $2^{1-\frac2p}\kappa^{-\frac3{2p}}\frac{\alpha^2}{4\beta}\underline{\lambda}(2-\overline{\lambda})\eta^{\frac2p}$, $\kappa^{-\frac3{2p}}\frac{\alpha^2}{4\beta}\underline{\lambda}(2-\overline{\lambda})\epsilon^{\frac2p}$ in place of $u_k$, $r$, $a$, $b$) to concluding that there exists $\tau\in (0,1)$ such that \eqref{eq-cr-dyn2a} is satisfied.
\end{proof}

\begin{theorem}\label{thm-abs-dimi}
Let $\{x_k\}\subseteq X$ satisfy ${\rm (H1)}$-${\rm (H3)}$, and $\{v_k\}$ be given by \eqref{eq-dim+stepsize}. Suppose that $f$ is coercive and that $X^*$ is the set of boundedly weak sharp minima of order $2p$ for $f$ over $X$ with modulus $\eta$. Then, the following assertions are true.
\begin{enumerate}[{\rm (i)}]
  \item If $\epsilon=0$, then either $x_k\in X^*$ for some $k\in \IN$ or there exists $N\in \IN$ such that
    \begin{equation*}\label{eq-cr-con2}
    \dist^2(x_k,X^*)\le \frac{\beta c}{\alpha }\left(\frac{2}{\eta}\right)^{\frac1p} k^{-s}\quad \mbox{for each } k\ge N.
    \end{equation*}
  \item If $\epsilon>0$, then either $x_k\in X^*_\epsilon$ for some $k\in \IN$ or there exist $C>0$ and $\tau\in (0,1)$ such that
  \begin{equation*}
    \dist^2(x_k,X^*)\le C\tau^k +\left(\frac{2\epsilon}{\eta}\right)^{\frac1p}\quad \mbox{for each } k\in \IN.
    \end{equation*}
\end{enumerate}
\end{theorem}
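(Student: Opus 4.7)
The plan is to mimic the proof of Theorem~\ref{thm-abs-con} almost verbatim, substituting Lemma~\ref{lem-dimi} for Lemma~\ref{lem-new} because the stepsize $v_k=ck^{-s}$ is now diminishing rather than constant. I would proceed in three stages: first show that $\{x_k\}$ is bounded, then reduce (H1) to a scalar recursion for $u_k:=\dist^2(x_k,X^*)$ via the weak sharp minima assumption, and finally invoke Lemma~\ref{lem-dimi} with an appropriate choice of $t$ and $s$ to obtain the two stated rates.

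For boundedness I would copy the template used in the proof of Theorem~\ref{thm-abs-con}: fix $\kappa>1$, pick $\sigma>\epsilon$, and set $X^*_\sigma:=X\cap{\rm lev}_{\le f^*+\sigma}f$ and $\rho(\sigma):=\max_{x\in X^*_\sigma}\dist(x,X^*)$, both finite by coercivity. Theorem~\ref{thm-abs}(ii) gives $\liminf_k f(x_k)\le f^*+\epsilon<f^*+\sigma$, and (H1) together with $v_k\to 0$ forces either a strict distance decrease (whenever $f(x_k)>f^*+\sigma$) or, by (H3), only a bounded excursion from $X^*_\sigma$, so $\{x_k\}\subseteq \mathbf{B}(0,r)$ for some $r>0$. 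The boundedly weak sharp minima assumption of order $2p$ then yields $f(x_k)-f^*\ge \eta\,\dist^{2p}(x_k,X^*)$, i.e.\ $(f(x_k)-f^*)^{1/p}\ge \eta^{1/p}u_k$. Substituting $x^*=\mathbb{P}_{X^*}(x_k)$ into (H1) and applying the elementary inequality \eqref{eq-lpnorm} to $(f(x_k)-f^*-\epsilon)^{1/p}$ yields, for all $k$ past some threshold,
\begin{equation*}
u_{k+1}\le \bigl(1-2^{1-1/p}\alpha_k v_k\eta^{1/p}\bigr)u_k+\alpha_k v_k\epsilon^{1/p}+\beta_k v_k^2.
\end{equation*}

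For part (i) with $\epsilon=0$ the middle term disappears and the recursion takes the form $u_{k+1}\le(1-ak^{-s})u_k+bk^{-2s}$ with $a$ arbitrarily close to $\alpha c\eta^{1/p}$ and $b$ close to $\beta c^2$; since $t=2s>s$, Lemma~\ref{lem-dimi}(i) gives $u_{k+1}\le (b/a)k^{-s}+o(k^{-s})$, and the factor $2^{1/p}$ in the stated bound $\frac{\beta c}{\alpha}(2/\eta)^{1/p}$ provides exactly the slack needed to absorb the $o(k^{-s})$ remainder. For part (ii) with $\epsilon>0$, I would exploit $v_k\to 0$ to observe that eventually $\beta_k v_k^2\le \alpha_k v_k\epsilon^{1/p}$, so the recursion reduces to $u_{k+1}\le(1-ak^{-s})u_k+\tilde b k^{-s}$ with $\tilde b$ close to $2\alpha c\epsilon^{1/p}$; now $t=s$ and Lemma~\ref{lem-dimi}(ii) yields the geometric envelope $u_{k+N}\le C\tau^k+\tilde b/a$, where a short arithmetic check shows $\tilde b/a=2\alpha c\epsilon^{1/p}/(2^{1-1/p}\alpha c\eta^{1/p})=(2\epsilon/\eta)^{1/p}$. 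The principal obstacle is precisely this bookkeeping: in (i) absorbing the $o(k^{-s})$ remainder into the leading constant, and in (ii) arranging that the $k^{-2s}$ noise be dominated by the $k^{-s}$ noise so that Lemma~\ref{lem-dimi}(ii), rather than part (i), is the applicable tool—a subtle choice crucial for getting geometric (rather than merely sublinear) convergence to the $\epsilon$-neighborhood.
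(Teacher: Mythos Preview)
Your proposal is correct and follows essentially the same approach as the paper's own proof: establish boundedness by the coercivity/excursion argument borrowed from Theorem~\ref{thm-abs-con}, combine (H1) with the weak sharp minima inequality and \eqref{eq-lpnorm} to obtain the scalar recursion \eqref{eq-pro-dimi2}, then apply Lemma~\ref{lem-dimi}(i) with $t=2s$ for part~(i) and, after absorbing the $k^{-2s}$ noise into the $k^{-s}$ term for $k$ large, Lemma~\ref{lem-dimi}(ii) with $t=s$ for part~(ii). The only cosmetic difference is that the paper fixes the threshold $N\ge (c\beta/\alpha)^{1/s}(\kappa/\epsilon)^{1/(sp)}$ explicitly to effect the absorption in part~(ii), whereas you invoke $v_k\to 0$ qualitatively; the arithmetic leading to the constant $(2\epsilon/\eta)^{1/p}$ is identical.
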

\begin{proof}
Without loss of generality, we assume that $f(x_k)> f^*+\epsilon$ for each $k\in \IN$; otherwise, this theorem holds automatically. Then,  (H1) and \eqref{eq-dim+stepsize} show that, for each $x^* \in X^*_\epsilon$ and $k\in \IN$,
\begin{equation}\label{eq-pro-dimi}
\|x_{k+1}-x^*\|^2 \le  \|x_k-x^*\|^2  -\alpha_kck^{-s} (f(x_k)-f^*-\epsilon)^{\frac1p}+ \beta_kc^2k^{-2s}.
\end{equation}
Fix $\kappa>1$ and $\sigma>\epsilon$, and define $X^*_\sigma$ and $\rho(\sigma)$ by \eqref{eq-pro-con6}. By ${\rm (H2)}$ and ${\rm (H3)}$, there exists $N\ge (\frac{c\beta}{\alpha})^{\frac1s}(\frac{\kappa}{\sigma-\epsilon})^{\frac1{sp}}$ such that \eqref{eq-pro-con1} is satisfied. Similar to the arguments that we did for \eqref{eq-pro-con4ab}, we can derive that
$\dist(x_k,X^*)<\rho(\sigma)+2\gamma c$ for each $k> N$.
Hence, $\{x_k\}$ is bounded, and then, by assumption of boundedly weak sharp minima property of order $2p$, there exists $\eta>0$ such that \eqref{eq-pro-con7} is satisfied with $q=2p$. This, together with \eqref{eq-pro-dimi} and \eqref{eq-lpnorm}, implies that, for each $k\ge N$,
\begin{equation}\label{eq-pro-dimi2}
\dist^2(x_{k+1},X^*)\le (1-2^{1-\frac1p}\alpha \kappa^{-\frac1{2p}}\eta^{\frac1p} ck^{-s})\dist^2(x_k,X^*)+\alpha \kappa^{-\frac1{2p}}\epsilon^{\frac1p}ck^{-s}+ \beta\kappa^{\frac1{2p}} c^2k^{-2s}.
\end{equation}

(i) Suppose that $\epsilon=0$. Lemma \ref{lem-dimi}(i) is applicable (with $2^{1-\frac1p}\alpha \kappa^{-\frac1{2p}}\eta^{\frac1p} c$, $\beta\kappa^{\frac1{2p}} c^2$, $2s$ in place of $a$, $b$, $t$) to obtaining the conclusion (as $\kappa>1$ is arbitrary). 

(ii) Suppose that $\epsilon>0$. Letting $N\ge (\frac{c\beta}{\alpha})^{\frac1s}(\frac{\kappa}{\epsilon})^{\frac1{sp}}$, \eqref{eq-pro-dimi2} is reduced to
\[
\dist^2(x_{k+1},X^*)\le (1-2^{1-\frac1p}\alpha \kappa^{-\frac1{2p}}\eta^{\frac1p} ck^{-s})\dist^2(x_k,X^*)+2\alpha \kappa^{-\frac1{2p}}\epsilon^{\frac1p}ck^{-s},
\]
and then, Lemma \ref{lem-dimi}(ii) is applicable (with $2^{1-\frac1p}\alpha \kappa^{-\frac1{2p}}\eta^{\frac1p} c$ and $2\alpha \kappa^{-\frac1{2p}}\epsilon^{\frac1p}c$ in place of $a$ and $b$) to obtaining the conclusion.
\end{proof}

\begin{remark}
Theorems on convergence rates improve the results in \cite{HuOpt18}, in which only the global convergence theorems were provided.

{\rm (i)} Theorems \ref{thm-abs-con} and \ref{thm-abs-dyn} show the linear convergence rates of the sequence satisfying ${\rm (H1)}$-${\rm (H3)}$ to a certain region (i.e., $\mathcal{O}((\epsilon^{\frac1p}+v)^{\frac{2p}q})$ or $\mathcal{O}(\epsilon^{\frac{2}q})$) of the optimal solution set under the boundedly weak sharp minima of order $q$ when using the constant or dynamic stepsize rules, respectively.

{\rm (ii)} In the special case when $\epsilon=0$ and using the dynamic stepsize rule, Theorem \ref{thm-abs-dyn} presents the linear (or sublinear) convergence of the sequence satisfying ${\rm (H1)}$-${\rm (H2)}$ to the optimal solution set under the boundedly weak sharp minima of order $q$.

{\rm (iii)} When using the diminishing stepsize rule \eqref{eq-dim+stepsize} and under the boundedly weak sharp minima of order $2p$, Theorem \ref{thm-abs-dimi} shows the sublinear convergence rate to the optimal solution set or the linear convergence rate to a certain region (i.e., $\mathcal{O}(\epsilon^{\frac1p})$) of the optimal solution set for the exact or inexact framework, respectively.
\end{remark}

\section{Applications to subgradient methods}
Quasi-convex optimization plays an important role in various fields such as economics, finance and engineering.
The subgradient method is a popular algorithm for solving constrained quasi-convex optimization problems. It was shown in \cite{HuOpt18} that several types of subgradient methods for solving quasi-convex optimization problem satisfy conditions (H1)-(H3) assumed in the unified framework (with $\epsilon=0$) under the H\"{o}lder condition. Hence, in this section, we directly apply the convergence theorems obtained in the preceding section to establish the convergence properties of several subgradient methods for solving quasi-convex optimization problems. The convergence theorems (resp. Theorems \ref{thm-sgm}, \ref{thm-sgmin} and \ref{thm-sgmcg}) cover the existing results in the literature of subgradient methods (resp. \cite[Theorem 4.2]{HuOpt18}, \cite[Theorem 3.1]{HuEJOR15} and \cite[Theorems 3.3 and 3.5]{HuJNCA16b}). To the best of our knowledge, the theorems of complexity estimation (i.e., Theorems \ref{thm-cp-sgm}, \ref{thm-cp-sgmin} and \ref{thm-cp-sgmcg}) and convergence rates (i.e., Theorems \ref{thm-cr-sgm}, \ref{thm-cr-sgmin} and \ref{thm-cr-sgmcg}) of subgradient methods for quasi-convex optimization are new in the literature. Throughout the whole section, we make the following blanket assumption:
\begin{itemize}
  \item $f:\R^n \to \R$ is quasi-convex and continuous, and satisfies the H\"{o}lder condition of order $p$ with modulus $L$ on $\R^n$.
\end{itemize}
\subsection{Subgradient method}
It was claimed in \cite{HuOpt18} the standard subgradient method (i.e., Algorithm \ref{alg-SGM}) satisfies the following basic inequality under the blanket assumption:
\[
\| x_{k+1}-x^* \|^2 \le \| x_k-x^* \|^2 - 2v_k \left(\frac{f(x_k)-f^*}{L}\right)^\frac1p + {v_k}^2
\]
whenever $f(x_k)>f^*$, and $\| x_{k+1}-x_k\|\le v_k$. That is, conditions (H1)-(H3) are satisfied with
\[
\epsilon=0, \quad \alpha_k\equiv 2L^{-\frac1p}, \quad \beta_k\equiv 1, \quad \gamma_k\equiv 1.
\]
Therefore, the convergence theorems established in the preceding section can be directly applied to establish the convergence properties of the standard subgradient method as follows.
\begin{theorem}\label{thm-sgm}
Let $\{x_k\}$ be a sequence generated by Algorithm \ref{alg-SGM}.
\begin{enumerate}[{\rm (i)}]
  \item If $v_k\equiv v>0$, then $\liminf_{k\to \infty} f(x_k)\le f^*+L\left(\frac12 v\right)^p$.
  \item If $\{v_k\}$ is given by \eqref{eq-dim+stepsize}, then $\liminf_{k\to \infty} f(x_k)\le f^*$.
  \item If $\{v_k\}$ is given by
  \begin{equation}\label{eq-dyn-sgm}
    v_k:= \lambda_k \left(\frac{f(x_k)-f^*}{L}\right)^{\frac1p},\quad \mbox{where $0<\underline{\lambda}\le \lambda_k \le \overline{\lambda}<2$},
  \end{equation}
  then $\lim_{k\to \infty} f(x_k)= f^*$.
\end{enumerate}
\end{theorem}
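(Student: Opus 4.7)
The plan is to treat Theorem \ref{thm-sgm} as a direct corollary of Theorem \ref{thm-abs} by exploiting the parameter identification already recorded in the paragraph preceding the statement. Specifically, under the blanket H\"older assumption, Algorithm \ref{alg-SGM} satisfies (H1)--(H3) with the explicit choices $\epsilon=0$, $\alpha_k\equiv 2L^{-1/p}$, $\beta_k\equiv 1$ and $\gamma_k\equiv 1$, so the limits in (H2) are $\alpha=2L^{-1/p}$ and $\beta=1$. Each of the three assertions is then obtained by plugging these constants into the corresponding assertion of Theorem \ref{thm-abs} and simplifying.

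For assertion (i), I would invoke Theorem \ref{thm-abs}(i), which yields $\liminf_{k\to\infty} f(x_k)\le f^*+(\beta v/\alpha)^p+\epsilon$. Substituting $\epsilon=0$, $\beta=1$ and $\alpha=2L^{-1/p}$ gives $(\beta v/\alpha)^p=(v L^{1/p}/2)^p=L(v/2)^p$, which is exactly the stated bound. For assertion (ii), Theorem \ref{thm-abs}(ii) applied with the diminishing stepsize \eqref{eq-dim+stepsize} immediately returns $\liminf_{k\to\infty} f(x_k)\le f^*+\epsilon=f^*$.

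For assertion (iii), the step that requires a moment of care is verifying that the dynamic rule \eqref{eq-dyn-sgm} coincides with the abstract dynamic rule \eqref{eq-dyn+stepsize} under the parameter identification above. Substituting $\alpha_k=2L^{-1/p}$, $\beta_k=1$ and $\epsilon=0$ into \eqref{eq-dyn+stepsize} produces
\[
v_k=\frac{\alpha_k\lambda_k}{2\beta_k}\bigl[f(x_k)-f^*\bigr]_+^{1/p}
   =\lambda_k L^{-1/p}\bigl[f(x_k)-f^*\bigr]_+^{1/p}
   =\lambda_k\!\left(\frac{f(x_k)-f^*}{L}\right)^{\!1/p}\!,
\]
which matches \eqref{eq-dyn-sgm} (the bracket $[\cdot]_+$ may be dropped since $f(x_k)\ge f^*$). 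Theorem \ref{thm-abs}(iii) then gives either $x_k\in X^*_0=X^*$ (in which case $f(x_k)=f^*$) or $\lim_{k\to\infty}f(x_k)\le f^*$; combining with the trivial lower bound $f(x_k)\ge f^*$ for all $k$ yields $\lim_{k\to\infty}f(x_k)=f^*$.

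There is essentially no combinatorial obstacle in this argument: the heavy lifting has already been performed inside Theorem \ref{thm-abs}, and the basic inequality underlying (H1)--(H3) was recorded from \cite{HuOpt18}. The only potential pitfalls are bookkeeping the exponents correctly (in particular the factor $L^{1/p}$ that appears when $\alpha_k$ is moved inside the $p$-th power in case (i)), and observing that the $[\,\cdot\,]_+$ truncation in \eqref{eq-dyn+stepsize} is redundant here because $\epsilon=0$ and $f^*$ is the global infimum of $f$ on $X$. Thus the proof will consist of three short one-line invocations of Theorem \ref{thm-abs} together with the algebraic simplifications just described.
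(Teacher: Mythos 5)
Your proposal is correct and coincides with the paper's own (implicit) argument: the paper gives no separate proof of Theorem \ref{thm-sgm}, but simply records that Algorithm \ref{alg-SGM} satisfies (H1)--(H3) with $\epsilon=0$, $\alpha_k\equiv 2L^{-1/p}$, $\beta_k\equiv 1$, $\gamma_k\equiv 1$ and then invokes Theorem \ref{thm-abs}, exactly as you do, with the same algebraic simplifications (including the identification of \eqref{eq-dyn-sgm} with \eqref{eq-dyn+stepsize} and the observation, made in the remark following the theorem, that once $x_k\in X^*$ the stepsize vanishes and the iterates stay there).
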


\begin{remark}
For the dynamic stepsize rule \eqref{eq-dyn-sgm} (\eqref{eq-dyn-sgmin} or \eqref{eq-dyn-sgmcg} in the sequel), once $x_k$ enters $X^*$ (or $X^*_\epsilon$), the stepsize will be zero, the iterates will stay at the optimal solution (or the approximate optimal solution), and thus, the conclusions of global convergence and convergence rate follow automatically.
\end{remark}

\begin{theorem}\label{thm-cp-sgm}
Let $\delta>0$, and let $\{x_k\}$ be a sequence generated by Algorithm \ref{alg-SGM}.
\begin{enumerate}[{\rm (i)}]
  \item Let $K_1:= \frac{L^{\frac1p}\dist^2(x_1,X^*)}{2 v \delta} $ and $v_k\equiv v>0$. Then
  $\min\limits_{1\le k\le K_1} f(x_k)\le f^*+\left(\frac12 L^{\frac1p}v+\delta\right)^p$.
  \item Let $K_2:=\left(\frac{(1-s)L^{\frac1p}\dist^2(x_1,X^*)}{2 c \delta}\right)^{\frac1{1-s}}$ and $\{v_k\}$ be given by \eqref{eq-dim+stepsize}. Then
  $\min\limits_{1\le k\le K_2} f(x_k)\le f^*+\left(\frac12 L^{\frac1p}ck^{-s}+\delta\right)^p$.
  \item Let $K_3:= \frac{L^{\frac2p}\dist^2(x_1,X^*)}{\underline{\lambda}(2-\overline{\lambda})\delta^2} $ and $\{v_k\}$ be given by \eqref{eq-dyn-sgm}. Then $\min\limits_{1\le k\le K_3} f(x_k)\le f^*+\delta^p$.
\end{enumerate}
\end{theorem}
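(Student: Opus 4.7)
The plan is to obtain the three complexity bounds as direct specializations of Theorem~\ref{thm-ce-abs}. The key observation, recorded at the opening of this subsection, is that under the blanket H\"{o}lder assumption the standard subgradient method satisfies conditions ${\rm (H1)}$--${\rm (H3)}$ with the concrete parameter choice
\[
\epsilon=0,\quad \alpha_k\equiv 2L^{-1/p},\quad \beta_k\equiv 1,\quad \gamma_k\equiv 1,
\]
so that in particular $\alpha_{\inf}=2L^{-1/p}$ and $\beta_{\sup}=1$. With this identification in hand, the rest of the proof is pure bookkeeping.

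For assertion (i) I would substitute these constants into the formula $K_1=\dist^2(x_1,X^*)/(\alpha_{\inf}v\delta)$ and into the right-hand side $f^*+(\beta_{\sup}v/\alpha_{\inf}+\delta)^p+\epsilon$ supplied by Theorem~\ref{thm-ce-abs}(i); the former becomes $L^{1/p}\dist^2(x_1,X^*)/(2v\delta)$, and the latter collapses to $f^*+(L^{1/p}v/2+\delta)^p$, giving the claimed bound. The same substitution in Theorem~\ref{thm-ce-abs}(ii) turns $K_2$ into $\bigl((1-s)L^{1/p}\dist^2(x_1,X^*)/(2c\delta)\bigr)^{1/(1-s)}$ and produces the stated bound $f^*+(L^{1/p}ck^{-s}/2+\delta)^p$. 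For assertion (iii), plugging into the formula for $K_3$ yields
\[
K_3=\frac{4\dist^2(x_1,X^*)}{4L^{-2/p}\underline{\lambda}(2-\overline{\lambda})\delta^2}=\frac{L^{2/p}\dist^2(x_1,X^*)}{\underline{\lambda}(2-\overline{\lambda})\delta^2},
\]
while the right-hand bound reduces to $f^*+\delta^p$ because $\epsilon=0$.

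The only nonroutine step is to verify that the dynamic stepsize \eqref{eq-dyn-sgm} used in Algorithm~\ref{alg-SGM} coincides with the generic rule \eqref{eq-dyn+stepsize} under this specialization, so that part (iii) of Theorem~\ref{thm-ce-abs} truly applies. This is a one-line computation: with $\alpha_k\equiv 2L^{-1/p}$, $\beta_k\equiv 1$, $\epsilon=0$,
\[
\frac{\alpha_k\lambda_k}{2\beta_k}\bigl[f(x_k)-f^*\bigr]_+^{1/p}=\lambda_k L^{-1/p}\bigl(f(x_k)-f^*\bigr)^{1/p}=\lambda_k\left(\frac{f(x_k)-f^*}{L}\right)^{1/p},
\]
which is exactly \eqref{eq-dyn-sgm}. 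I do not anticipate any substantive obstacle: all the analytical work has already been carried out in Section~3, so the role of this theorem is merely to instantiate the abstract constants. The only point worth double-checking carefully is the precise scaling of $\alpha_k$ and $\beta_k$ in the basic inequality for Algorithm~\ref{alg-SGM}, which follows from the H\"{o}lder condition together with $\|g_k\|=1$.
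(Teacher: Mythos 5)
Your proposal is correct and is precisely the argument the paper intends: the paper offers no separate proof of Theorem~\ref{thm-cp-sgm}, stating only that it follows by direct application of Theorem~\ref{thm-ce-abs} with $\epsilon=0$, $\alpha_k\equiv 2L^{-1/p}$, $\beta_k\equiv 1$. Your substitutions for $K_1$, $K_2$, $K_3$ and the corresponding bounds all check out, and your verification that \eqref{eq-dyn-sgm} is the specialization of \eqref{eq-dyn+stepsize} is the right (and only) nontrivial point to confirm.
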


\begin{theorem}\label{thm-cr-sgm}
Let $\{x_k\}$ be a sequence generated by Algorithm \ref{alg-SGM}.  Suppose that $X^*$ is the set of boundedly weak sharp minima of order $q$ for $f$ over $X$ with modulus $\eta$.
\begin{enumerate}[{\rm (I)}]
  \item Suppose that $v_k\equiv v>0$ and that $f$ is coercive.
      \begin{enumerate}[{\rm (i)}]
      \item If $q= 2p$, then either $x_k\in X^*$ for some $k\in \IN$ or there exist $\tau\in [0,1)$ and $N\in \IN$ such that
        \[
        \dist^2(x_{k+N},X^*)\le \tau^k \dist^2(x_N,X^*) +2^{\frac1p-2}\left(\frac{L}{\eta}\right)^{\frac1p}v \quad \mbox{for each } k\in \IN.
        \]
      \item If $q>2p$ and $v<\left(2^{-p}\frac{L}{\eta}(\frac{2p}{q})^{\frac{q}{2}}\right)^{\frac{1}{q-p}}$, then either $x_k\in X^*$ for some $k\in \IN$ or there exist $\tau\in (0,1)$ and $N\in \IN$ such that
        \[
        \dist^2(x_{k+N},X^*)\le \tau^k \dist^2(x_N,X^*) +2^{\frac2q-\frac{4p}q}\left(\frac{L}{\eta}\right)^{\frac2q}v^{\frac{2p}q}\quad \mbox{for each } k\in \IN.
        \]
    \end{enumerate}
  \item Suppose that $\{v_k\}$ is given by \eqref{eq-dim+stepsize}, $q=2p$ and that $f$ is coercive. Then, either $x_k\in X^*$ for some $k\in \IN$ or there exists $N\in \IN$ such that
    \begin{equation*}\label{eq-cr-con2}
    \dist^2(x_k,X^*)\le \frac{c}2\left(\frac{2L}\eta\right)^{\frac1p}k^{-s}\quad \mbox{for each } k\ge N.
    \end{equation*}
  \item Suppose that $\{v_k\}$ is given by \eqref{eq-dyn-sgm}.
    \begin{enumerate}
      \item[{\rm (i)}] If $q= p$, then there exist $\tau\in [0,1)$ and $N\in \IN$ such that
        \[
        \dist^2(x_{k+N},X^*)\le \tau^k \dist^2(x_N,X^*) \quad \mbox{for each }  k\in \IN.
        \]
        \item[{\rm (ii)}] If $q>p$, then there exist $\gamma>0$ and $N\in \IN$ such that
          \[
          \dist^2(x_{k+N},X^*)\le \frac{\dist^2(x_N,X^*)}{(1+\gamma k)^{\frac{p}{q-p}}}\quad \mbox{for each } k\in \IN.
          \]
    \end{enumerate}
\end{enumerate}
\end{theorem}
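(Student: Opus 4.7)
The plan is to derive Theorem \ref{thm-cr-sgm} as a direct corollary of the three abstract convergence-rate theorems \ref{thm-abs-con}, \ref{thm-abs-dimi}, and \ref{thm-abs-dyn}, since the paper has already recorded that the iterates $\{x_k\}$ of Algorithm \ref{alg-SGM} obey (H1)--(H3) with the identifications $\epsilon=0$, $\alpha_k\equiv 2L^{-1/p}$, $\beta_k\equiv 1$, and $\gamma_k\equiv 1$. Hence the remaining task reduces to substituting these values into the abstract bounds and checking that the constants collapse to those advertised here.

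For part (I), with the constant stepsize $v_k\equiv v$, I would invoke Theorem \ref{thm-abs-con} with $\alpha=2L^{-1/p}$ and $\beta=1$, so that the combined noise/stepsize quantity $\epsilon^{1/p}+\beta v/\alpha$ reduces to $vL^{1/p}/2$. In case (i), $q=2p$, the additive error $2^{1/p-1}\eta^{-1/p}\cdot vL^{1/p}/2 = 2^{1/p-2}(L/\eta)^{1/p}v$ is exactly what is claimed. In case (ii), $q>2p$, the abstract smallness assumption $vL^{1/p}/2<\eta^{-2/(q-2p)}(pL^{1/p}/(vq))^{q/(q-2p)}$ must be rearranged: raising both sides to the power $(q-2p)/(2(q-p))$ and collecting the exponents of $v$, $L$, $\eta$, and $p/q$ produces the cleaner form $v<(2^{-p}(L/\eta)(2p/q)^{q/2})^{1/(q-p)}$, while the additive error simplifies to $2^{2/q-4p/q}(L/\eta)^{2/q}v^{2p/q}$.

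For part (II), with the diminishing stepsize \eqref{eq-dim+stepsize} and $q=2p$, I apply Theorem \ref{thm-abs-dimi}(i). Coercivity passes through directly, and the constant $(\beta c/\alpha)(2/\eta)^{1/p}=(cL^{1/p}/2)(2/\eta)^{1/p}=(c/2)(2L/\eta)^{1/p}$ is exactly the coefficient of $k^{-s}$ asserted.

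For part (III), I first verify that the dynamic rule \eqref{eq-dyn-sgm} is precisely the specialization of the abstract rule \eqref{eq-dyn+stepsize}: substituting $\epsilon=0$, $\alpha_k=2L^{-1/p}$, $\beta_k=1$ into \eqref{eq-dyn+stepsize} yields $v_k=\lambda_k((f(x_k)-f^*)/L)^{1/p}$, which is \eqref{eq-dyn-sgm}. Theorem \ref{thm-abs-dyn}(i) at $q=p$ with $\epsilon=0$ then delivers the linear bound (the $\epsilon^{2/p}$ term vanishes), and Theorem \ref{thm-abs-dyn}(ii) at $q>p$ with $\epsilon=0$ delivers the sublinear $(1+\gamma k)^{-p/(q-p)}$ decay. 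The only step that is not pure bookkeeping is the algebraic rewriting in part (I)(ii) mentioned above; beyond that I expect no genuine analytical obstacle, since the architecture of the abstract framework was designed precisely to absorb this kind of specialization.
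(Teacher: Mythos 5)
Your proposal is correct and follows exactly the route the paper itself takes: Section 4.1 records that Algorithm \ref{alg-SGM} satisfies (H1)--(H3) with $\epsilon=0$, $\alpha_k\equiv 2L^{-1/p}$, $\beta_k\equiv 1$, $\gamma_k\equiv 1$, and the theorem is then stated as a direct specialization of Theorems \ref{thm-abs-con}, \ref{thm-abs-dimi} and \ref{thm-abs-dyn} with no further argument. Your constant bookkeeping (including the rearrangement of the smallness condition on $v$ in part (I)(ii) to the exponent $1/(q-p)$) checks out.
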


\begin{remark}
(i) When using the dynamic stepsize rule, the linear convergence rate of the subgradient method for solving convex optimization problems was shown in \cite[Theorem 2.5]{Brannlund1995} under the assumption of weak sharp minima. Theorem \ref{thm-cr-sgm} remarkably extends the result in \cite{Brannlund1995} to quasi-convex optimization, using several typical stepsize rules and under the weaker assumption of weak sharp minima of H\"{o}lderian order.

(ii) When using the constant stepsize rule, \cite[Theorem 1]{Johnstone2019} proved the linear convergence rate of the subgradient method for solving convex optimization problems under the assumption of weak sharp minima of H\"{o}lderian order. Theorem \ref{thm-cr-sgm} extends the result in \cite{Brannlund1995} to quasi-convex optimization and using several typical stepsize rules.
\end{remark}
\subsection{Inexact subgradient method}
In many applications, the computation error stems from practical considerations and is inevitable in the computing process. To meet the requirement of practical applications, an inexact subgradient method was proposed in \cite{HuEJOR15} to solve a constrained quasi-convex optimization problem \eqref{eq-QCO}, in which an $\epsilon$-quasi-subgradient is employed (with an additional noise), and the global convergence theorem is established there. This section is devoted to establishing the iteration complexity and convergence rates of the inexact subgradient method for solving problem \eqref{eq-QCO}, which is formally described as follows.
\begin{algorithm}\label{alg-IneSGM}
{\rm
Let $\epsilon>0$. Select an initial point $x_1\in \R^n$ and a sequence of stepsizes $\{v_k\}\subseteq (0,+\infty)$. For each $k\in \IN$, having $x_k$, we select
$g_k\in \partial^*_\epsilon f(x_k)\cap \mathbf{S}$ and update $x_{k+1}$ by
\begin{equation}\label{eq-ineSGM}
x_{k+1}:=\mathbb{P}_X (x_k-v_k g_k).
\end{equation}
}
\end{algorithm}

We claim that the inexact subgradient method satisfies conditions (H1)-(H3) under the blanket assumption. To this end, we provide in the following lemma an important property of a quasi-convex function, which is inspired by \cite[Proposition 2.1]{Konnov03}.

\begin{lemma}\label{lem-HC-epsilon}
Let $x\in X$ be such that $f(x)>f^*+\epsilon$ and let $g\in \partial^*_{\epsilon} f(x)\cap \mathbf{S}$. Then, it holds that
\[
\langle g,x-x^*\rangle \ge \left(\frac{f(x)-f^*-\epsilon}{L}\right)^{\frac1p} \quad \mbox{for each } x^*\in X^*.
\]
\end{lemma}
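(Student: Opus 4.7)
The plan is to exploit the geometry of the $\epsilon$-quasi-subdifferential: since $g$ is an outward normal (with unit length) to the strict sublevel set $\mathrm{lev}_{<f(x)-\epsilon}f$ at $x$, any ball that lies entirely inside this sublevel set must sit on the ``negative'' side of the hyperplane through $x$ with normal $g$, and this yields a quantitative lower bound on $\langle g, x-x^*\rangle$. The Hölder condition is precisely what lets us build such a ball around $x^*$.

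More concretely, first I would note that $f(x^*) = f^* < f(x)-\epsilon$, so $x^* \in \mathrm{lev}_{<f(x)-\epsilon}f$, which already gives $\langle g, x^*-x\rangle \le 0$ from the definition of $\partial^*_\epsilon f(x)$. Next, fix any $r>0$ with $r^p < (f(x)-f^*-\epsilon)/L$ and consider the closed ball $\bar{\mathbb{B}}(x^*,r)$. For $y\in \bar{\mathbb{B}}(x^*,r)$, since $x^*\in X^*$, we have $\mathrm{dist}(y,X^*)\le \|y-x^*\|\le r$, and the Hölder condition gives
\[
f(y)-f^* \;\le\; L\,\mathrm{dist}^p(y,X^*) \;\le\; Lr^p \;<\; f(x)-f^*-\epsilon,
\]
so $y \in \mathrm{lev}_{<f(x)-\epsilon}f$. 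Hence the whole ball lies in the strict sublevel set, and by the defining property of the $\epsilon$-quasi-subdifferential, $\langle g, y-x\rangle \le 0$ for every $y\in \bar{\mathbb{B}}(x^*,r)$.

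I would then specialize to $y = x^* + r g$, which is in $\bar{\mathbb{B}}(x^*,r)$ because $\|g\|=1$. This yields $\langle g, x^*+rg-x\rangle \le 0$, i.e.\ $\langle g, x-x^*\rangle \ge r\|g\|^2 = r$. Since this holds for every admissible $r$, letting $r \uparrow \bigl((f(x)-f^*-\epsilon)/L\bigr)^{1/p}$ delivers the claimed inequality.

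There is no real technical obstacle here; the only thing to be careful about is the strict versus non-strict sublevel set in the definition of $\partial^*_\epsilon f$, which is why I would take $r$ strictly below the critical radius and pass to the limit at the end rather than work directly with the boundary value. The Hölder condition and unit-norm hypothesis on $g$ do all the heavy lifting, and the result should be regarded as the quasi-convex analogue of the standard bound $\langle g, x-x^*\rangle \ge f(x)-f^*$ available for convex subgradients.
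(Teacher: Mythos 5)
Your proof is correct and follows essentially the same strategy as the paper's: both inscribe a ball centred at $x^*$ inside the strict sublevel set $\mathrm{lev}_{<f(x)-\epsilon}f$ via the H\"{o}lder condition and then test the defining inequality of $\partial^*_\epsilon f(x)$ at the point $x^*+rg$, letting the radius tend to its critical value. Your variant is in fact slightly more direct, since you pick $r$ strictly below $\left(\frac{f(x)-f^*-\epsilon}{L}\right)^{1/p}$ and verify the ball inclusion immediately from the H\"{o}lder condition, whereas the paper defines $r$ as the distance from $x^*$ to the boundary of the level set and then bounds that infimum from below.
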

\begin{proof}
Fix $x\in X$ be such that $f(x)>f^*+\epsilon$. Then, the level set ${\rm lev}_{<f(x)-\epsilon}f$ is nonempty open and convex by the blanket assumption that $f$ is quasi-convex and continuous on $\R^n$. Given $x^*\in X^*$, we define
\begin{equation}\label{eq-r}
r:=\inf\left\{\|y-x^*\|:y\in {\rm bd}\left({\rm lev}_{<f(x)-\epsilon}f\right)\right\},
\end{equation}
where ${\rm bd}(Z)$ denotes the boundary of the set $Z$. It follows that $r>0$ by the fact that $f(x)-\epsilon>f(x^*)$ and the H\"{o}lder condition. Furthermore, we have by Definition \ref{def-HC} that
\[f(y)-f^*\le L \dist^p(y,X^*)\quad \mbox{for each } y\in \R^n.\]
Taking the infimun over ${\rm bd}\left({\rm lev}_{<f(x)-\epsilon}f\right)$, we can show that
\begin{equation}\label{eq-HC-1}
f(x)-f^*-\epsilon\le L \inf\left\{ \dist^p(y,X^*):y\in {\rm bd}\left({\rm lev}_{<f(x)-\epsilon}f\right)\right\}\le Lr^p.
\end{equation}
Let $\delta\in (0,1)$. Since $g\in \partial^*_{\epsilon} f(x)\cap \mathbf{S}$, we obtain by \eqref{eq-r} that $x^*+\delta r g\in {\rm lev}_{<f(x)-\epsilon}f$. Hence, it follows by definition that
\[\langle g,x-x^*\rangle=\langle g,x-(x^*+\delta r g)\rangle+\delta r  \ge \delta r.\]
Since $\delta\in (0,1)$ is arbitrary, one has $\langle g,x-x^*\rangle \ge r$. This, together with \eqref{eq-HC-1}, implies the conclusion.
\end{proof}

\begin{lemma}\label{lem-BI-epsilon}
Let $\{x_k\}$ be a sequence generated by Algorithm \ref{alg-IneSGM}.
\begin{enumerate}[{\rm (i)}]
  \item $\|x_{k+1}-x_k\|\le v_k$.
  \item   If $f(x_k)>f^*+\epsilon$, then it holds for each $x^*\in X^*$ that
    \begin{equation*}\label{eq-BI-epsilon}
    \|x_{k+1}-x^*\|^2\le \|x_k-x^*\|^2-2v_k \left(\frac{f(x_k)-f^*-\epsilon}{L}\right)^{\frac{1}{p}}+ v_k^2.
    \end{equation*}
\end{enumerate}
\end{lemma}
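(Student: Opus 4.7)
The plan is to prove both parts directly from the update rule \eqref{eq-ineSGM}, using only two ingredients: nonexpansiveness of the Euclidean projection $\mathbb{P}_X$ onto the convex set $X$, and the quasi-subgradient inequality established in Lemma \ref{lem-HC-epsilon}. Neither step should present a serious obstacle; the work is essentially a bookkeeping exercise combining a standard subgradient-method expansion with the H\"{o}lder-based lower bound on $\langle g_k, x_k - x^*\rangle$.

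For assertion (i), I would observe that $x_k\in X$ (the projection sends every iterate into $X$), hence $\mathbb{P}_X(x_k)=x_k$. Applying the $1$-Lipschitz property of $\mathbb{P}_X$ to the update \eqref{eq-ineSGM} gives
\[
\|x_{k+1}-x_k\|=\|\mathbb{P}_X(x_k-v_kg_k)-\mathbb{P}_X(x_k)\|\le \|v_kg_k\|=v_k,
\]
where the last equality uses $g_k\in\mathbf{S}$. That disposes of (i).

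For assertion (ii), fix $x^*\in X^*\subseteq X$ so that $\mathbb{P}_X(x^*)=x^*$. Again using nonexpansiveness of $\mathbb{P}_X$ and then expanding the square, I would write
\[
\|x_{k+1}-x^*\|^2\le \|x_k-v_kg_k-x^*\|^2=\|x_k-x^*\|^2-2v_k\langle g_k, x_k-x^*\rangle+v_k^2\|g_k\|^2,
\]
and then use $\|g_k\|=1$ to simplify the last term to $v_k^2$. At this point the only remaining task is to bound $\langle g_k, x_k-x^*\rangle$ from below. Since the hypothesis $f(x_k)>f^*+\epsilon$ and the choice $g_k\in\partial^*_\epsilon f(x_k)\cap \mathbf{S}$ are exactly the assumptions of Lemma \ref{lem-HC-epsilon}, I would invoke that lemma to conclude
\[
\langle g_k, x_k-x^*\rangle\ge \left(\frac{f(x_k)-f^*-\epsilon}{L}\right)^{\frac1p},
\]
and substitute this into the inequality above to obtain the desired basic inequality.

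The only subtlety worth flagging is that the argument implicitly uses $x_k\in X$ for all $k$, which is guaranteed by the projection step in Algorithm \ref{alg-IneSGM} (and by the standing assumption $x_1$ can be projected onto $X$, which in the algorithm statement is written $x_1\in\R^n$ but is effectively harmless since from $k=2$ onward the iterates lie in $X$; alternatively, one can simply assume $x_1\in X$ without loss of generality). Beyond this, the proof is a direct substitution, and no inductive or averaging argument is needed.
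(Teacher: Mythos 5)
Your proof is correct and follows essentially the same route as the paper: part (i) from nonexpansiveness of $\mathbb{P}_X$ applied to the update, and part (ii) by expanding $\|x_k - v_k g_k - x^*\|^2$ after projecting and then invoking Lemma \ref{lem-HC-epsilon} to bound $\langle g_k, x_k - x^*\rangle$ from below. The remark about $x_k\in X$ being needed for Lemma \ref{lem-HC-epsilon} is a reasonable point of care, though the paper leaves it implicit.
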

\begin{proof}
Assertion (i) of this theorem directly follows from Algorithm \ref{alg-IneSGM} (cf. \eqref{eq-ineSGM}). To show assertion (ii), we suppose that $f(x_k)>f^*+\epsilon$ and fix $x^*\in X^*$. In view of Algorithm \ref{alg-IneSGM}, it follows from the nonexpansive property of projection operator that
\begin{equation}\label{eq-BI-0}
\begin{array}{lll}
\|x_{k+1}-x^*\|^2&\le \left\|x_k-v_k g_k-x^*\right\|^2 \\
&= \|x_k-x^*\|^2-2v_k \left\langle g_k, x_k-x^*\right\rangle+ v_k^2.
\end{array}
\end{equation}
By the assumption that $f(x_k)>f^*+\epsilon$, Lemma \ref{lem-HC-epsilon} is applicable to concluding that
\begin{equation*}\label{eq-BI-0a}
\langle g_k, x_k-x^*\rangle \ge \left(\frac{f(x_k)-f^*-\epsilon}{L}\right)^{\frac1p}.
\end{equation*}
This, together with \eqref{eq-BI-0}, implies the conclusion. 
\end{proof}

Lemma \ref{lem-BI-epsilon} shows that conditions (H1)-(H3) are satisfied for Algorithm \ref{alg-IneSGM} with
\[
\epsilon>0, \quad \alpha_k\equiv 2L^{-\frac1p}, \quad \beta_k\equiv 1, \quad \gamma_k\equiv 1.
\]
Hence, the convergence theorems established in the preceding section can be directly applied to establish the convergence properties of the inexact subgradient method as follows.
\begin{theorem}\label{thm-sgmin}
Let $\{x_k\}$ be a sequence generated by Algorithm \ref{alg-IneSGM}.
\begin{enumerate}[{\rm (i)}]
  \item If $v_k\equiv v>0$, then $\liminf_{k\to \infty} f(x_k)\le f^*+L\left(\frac12 v\right)^p+\epsilon$.
  \item If $\{v_k\}$ is given by \eqref{eq-dim+stepsize}, then $\liminf_{k\to \infty} f(x_k)\le f^*+\epsilon$.
  \item If $\{v_k\}$ is given by
  \begin{equation}\label{eq-dyn-sgmin}
    v_k:= \lambda_k \left(\frac{f(x_k)-f^*-\epsilon}{L}\right)_+^{\frac1p},\quad \mbox{where $0<\underline{\lambda}\le \lambda_k \le \overline{\lambda}<2$},
  \end{equation}
  then $\lim_{k\to \infty} f(x_k)\le f^*+\epsilon$.
\end{enumerate}
\end{theorem}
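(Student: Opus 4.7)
The proof is essentially an application of the abstract convergence theorem (Theorem \ref{thm-abs}) to Algorithm \ref{alg-IneSGM}, so my plan is just to verify the hypotheses and perform the right substitutions. Lemma \ref{lem-BI-epsilon} already does most of the work: part (ii) tells us that condition (H1) is satisfied with $\alpha_k \equiv 2L^{-1/p}$ and $\beta_k \equiv 1$, while part (i) gives condition (H3) with $\gamma_k \equiv 1$. Since these sequences are constant, (H2) is immediate with $\alpha = 2L^{-1/p}$, $\beta = 1$, and $\gamma = 1$. The value of $\epsilon > 0$ appearing in (H1) is exactly the $\epsilon$ from Algorithm \ref{alg-IneSGM}.

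For assertion (i), I would apply Theorem \ref{thm-abs}(i), whose conclusion reads $\liminf_{k\to\infty} f(x_k) \le f^* + (\beta v/\alpha)^p + \epsilon$. Substituting the identified constants gives $(\beta v/\alpha)^p = (v L^{1/p}/2)^p = L(v/2)^p$, which yields the stated bound. Assertion (ii) is even more direct: Theorem \ref{thm-abs}(ii) applies verbatim to any sequence satisfying (H1)-(H2) with the diminishing stepsize \eqref{eq-dim+stepsize}, and returns $\liminf_{k\to\infty} f(x_k) \le f^* + \epsilon$.

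For assertion (iii), the only bookkeeping step is to check that the dynamic stepsize \eqref{eq-dyn-sgmin} agrees with the general form \eqref{eq-dyn+stepsize} after substitution. With $\alpha_k = 2L^{-1/p}$ and $\beta_k = 1$, the general rule becomes
\[
\frac{\alpha_k \lambda_k}{2\beta_k}[f(x_k)-f^*-\epsilon]_+^{1/p} = L^{-1/p}\lambda_k [f(x_k)-f^*-\epsilon]_+^{1/p} = \lambda_k\left(\frac{f(x_k)-f^*-\epsilon}{L}\right)_+^{1/p},
\]
which is precisely \eqref{eq-dyn-sgmin}. Then Theorem \ref{thm-abs}(iii) gives either $x_k \in X_\epsilon^*$ for some $k$ (in which case $f(x_k) \le f^* + \epsilon$ and subsequent iterates stagnate, since the stepsize vanishes) or $\lim_{k\to\infty} f(x_k) \le f^* + \epsilon$, establishing the assertion.

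There is no genuine obstacle in this proof: the conceptual work has already been isolated into Lemma \ref{lem-BI-epsilon} (which ports the inexact quasi-subgradient geometry into the abstract framework via Lemma \ref{lem-HC-epsilon} and the H\"older condition) and into Theorem \ref{thm-abs} itself. The only care needed is matching the stepsize formulas in case (iii) and tracking the arithmetic of the constants in case (i); neither is substantive.
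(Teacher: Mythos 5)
Your proposal is correct and follows exactly the paper's route: the paper likewise notes that Lemma \ref{lem-BI-epsilon} yields (H1)--(H3) with $\epsilon>0$, $\alpha_k\equiv 2L^{-\frac1p}$, $\beta_k\equiv 1$, $\gamma_k\equiv 1$, and then invokes Theorem \ref{thm-abs} directly, with the same constant substitutions you carry out. Your added remark on the stagnation of iterates once $x_k\in X^*_\epsilon$ under the dynamic stepsize matches the paper's Remark following Theorem \ref{thm-sgm}.
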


\begin{theorem}\label{thm-cp-sgmin}
Let $\delta>0$, and let $\{x_k\}$ be a sequence generated by Algorithm \ref{alg-IneSGM}.
\begin{enumerate}[{\rm (i)}]
  \item Let $K_1:= \frac{L^{\frac1p}\dist^2(x_1,X^*)}{2 v \delta} $ and $v_k\equiv v>0$. Then
  $\min\limits_{1\le k\le K_1} f(x_k)\le f^*+\left(\frac12 L^{\frac1p}v+\delta\right)^p+\epsilon$.
  \item Let $K_2:=\left(\frac{(1-s)L^{\frac1p}\dist^2(x_1,X^*)}{2 c \delta}\right)^{\frac1{1-s}}$ and $\{v_k\}$ be given by \eqref{eq-dim+stepsize}. Then
  $\min\limits_{1\le k\le K_2} f(x_k)\le f^*+\left(\frac12 L^{\frac1p}ck^{-s}+\delta\right)^p+\epsilon$.
  \item Let $K_3:= \frac{L^{\frac2p}\dist^2(x_1,X^*)}{\underline{\lambda}(2-\overline{\lambda})\delta^2} $ and $\{v_k\}$ be given by \eqref{eq-dyn-sgmin}. Then $\min\limits_{1\le k\le K_3} f(x_k)\le f^*+\delta^p+\epsilon$.
\end{enumerate}
\end{theorem}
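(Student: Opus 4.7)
The plan is to derive this as a direct specialization of the abstract complexity bound in Theorem \ref{thm-ce-abs}. By Lemma \ref{lem-BI-epsilon}(i)--(ii), the iterates generated by Algorithm \ref{alg-IneSGM} satisfy both the step-displacement bound $\|x_{k+1}-x_k\|\le v_k$ (i.e.\ (H3) with $\gamma_k\equiv 1$) and, whenever $f(x_k)>f^*+\epsilon$, the inequality
\[
\|x_{k+1}-x^*\|^2 \le \|x_k-x^*\|^2 - 2v_k\left(\frac{f(x_k)-f^*-\epsilon}{L}\right)^{1/p} + v_k^2
\quad\text{for all } x^*\in X^*.
\]
This is precisely condition (H1) with the constant choices $\alpha_k\equiv 2L^{-1/p}$ and $\beta_k\equiv 1$, so (H2) holds trivially with $\alpha=2L^{-1/p}$, $\beta=1$, and consequently $\alpha_{\inf}=2L^{-1/p}$ and $\beta_{\sup}=1$.

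Next, I would match the stepsize rules. The constant and diminishing stepsizes of Algorithm \ref{alg-IneSGM} coincide verbatim with the general rules of Theorem \ref{thm-ce-abs}(i)--(ii). For the dynamic rule \eqref{eq-dyn-sgmin}, substituting $\alpha_k=2L^{-1/p}$ and $\beta_k=1$ into the abstract dynamic stepsize \eqref{eq-dyn+stepsize} gives $v_k=\frac{\alpha_k\lambda_k}{2\beta_k}[f(x_k)-f^*-\epsilon]_+^{1/p}=\lambda_k\bigl(\frac{f(x_k)-f^*-\epsilon}{L}\bigr)_+^{1/p}$, which is exactly \eqref{eq-dyn-sgmin}.

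Having matched the frameworks, I simply invoke Theorem \ref{thm-ce-abs} with the substitutions $\alpha_{\inf}=2L^{-1/p}$ and $\beta_{\sup}=1$ in each part. For (i), the bound $K_1=\dist^2(x_1,X^*)/(\alpha_{\inf}v\delta)$ becomes $L^{1/p}\dist^2(x_1,X^*)/(2v\delta)$, while $\frac{\beta_{\sup}}{\alpha_{\inf}}v=\frac{L^{1/p}}{2}v$, yielding the stated threshold. For (ii), $K_2=\bigl((1-s)\dist^2(x_1,X^*)/(\alpha_{\inf}c\delta)\bigr)^{1/(1-s)}$ reduces to the claimed expression, and the per-iteration bound rescales analogously. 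For (iii), $K_3=4\beta_{\sup}\dist^2(x_1,X^*)/\bigl(\alpha_{\inf}^2\underline{\lambda}(2-\overline{\lambda})\delta^2\bigr)$ simplifies to $L^{2/p}\dist^2(x_1,X^*)/\bigl(\underline{\lambda}(2-\overline{\lambda})\delta^2\bigr)$, and the optimality gap bound $\delta^p+\epsilon$ carries over unchanged.

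There is no genuine obstacle here: all the analytic work has already been done in Theorem \ref{thm-ce-abs} and in Lemma \ref{lem-BI-epsilon} (whose crux is Lemma \ref{lem-HC-epsilon}, giving the quasi-subgradient inner product bound under the H\"older condition). The only point requiring modest care is the bookkeeping that identifies the abstract parameters $(\alpha_{\inf},\beta_{\sup})$ with their explicit values $(2L^{-1/p},1)$ coming from the H\"older exponent $p$ and modulus $L$, and verifying that the $\delta$ appearing in Theorem \ref{thm-ce-abs} plays the same role here; both are routine.
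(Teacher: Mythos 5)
Your proposal is correct and follows exactly the route the paper intends: Lemma \ref{lem-BI-epsilon} verifies (H1)--(H3) with $\alpha_k\equiv 2L^{-1/p}$, $\beta_k\equiv 1$, $\gamma_k\equiv 1$, and the theorem is then the direct specialization of Theorem \ref{thm-ce-abs} with $\alpha_{\inf}=2L^{-1/p}$, $\beta_{\sup}=1$ (the paper gives no separate proof beyond this observation). Your arithmetic for $K_1$, $K_2$, $K_3$ and the matching of \eqref{eq-dyn-sgmin} with \eqref{eq-dyn+stepsize} all check out.
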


\begin{theorem}\label{thm-cr-sgmin}
Let $\{x_k\}$ be a sequence generated by Algorithm \ref{alg-IneSGM}. Suppose that $X^*$ is the set of boundedly weak sharp minima of order $q$ for $f$ over $X$ with modulus $\eta$.
\begin{enumerate}[{\rm (I)}]
  \item Suppose that $v_k\equiv v>0$ and that $f$ is coercive.
      \begin{enumerate}[{\rm (i)}]
      \item If $q= 2p$, then either $x_k\in X^*_\epsilon$ for some $k\in \IN$ or there exist $\tau\in [0,1)$ and $N\in \IN$ such that, for each $k\in \IN$,
        \[
        \dist^2(x_{k+N},X^*)\le \tau^k \dist^2(x_N,X^*) +2^{\frac1p-1}\eta^{-\frac1{p}}  \left(\epsilon^{\frac1p}+\frac12 L^{\frac1p}v\right).
        \]
      \item If $q>2p$ and $\epsilon^{\frac1p}+\frac12 L^{\frac1p}v<\eta^{-\frac{2}{q-2p}}(\frac{p}{vq}L^{\frac1p})^{\frac{q}{q-2p}}$, then either $x_k\in X^*_\epsilon$ for some $k\in \IN$ or there exist $\tau\in (0,1)$ and $N\in \IN$ such that, for each $k\in \IN$,
        \[
        \dist^2(x_{k+N},X^*)\le \tau^k \dist^2(x_N,X^*) +2^{\frac2q-\frac{2p}q}{\eta}^{-\frac{2}q} \left(\epsilon^{\frac1p}+\frac12 L^{\frac1p}v\right)^{\frac{2p}q}.
        \]
    \end{enumerate}
  \item Suppose that $\{v_k\}$ is given by \eqref{eq-dim+stepsize}, $q=2p$ and that $f$ is coercive. Then, either $x_k\in X^*_\epsilon$ for some $k\in \IN$ or there exist $C>0$ and $\tau\in (0,1)$ such that
  \begin{equation*}
    \dist^2(x_k,X^*)\le C\tau^k +\left(\frac{2\epsilon}{\eta}\right)^{\frac1p}\quad \mbox{for each } k\in \IN.
    \end{equation*}
  \item Suppose that $\{v_k\}$ is given by \eqref{eq-dyn-sgmin}.
    \begin{enumerate}
      \item[{\rm (i)}] If $q= p$, then either $x_k\in X^*_\epsilon$ for some $k\in \IN$ or there exist $\tau\in [0,1)$ and $N\in \IN$ such that
        \[
        \dist^2(x_{k+N},X^*)\le \tau^k \dist^2(x_N,X^*) +2^{\frac2p-1}\eta^{-\frac2{p}}\epsilon^{\frac2p} \quad \mbox{for each }  k\in \IN.
        \]
        \item[{\rm (ii)}] If $q>p$ and $\epsilon<\left(\frac{L}{\eta}\right)^{\frac{q}{q-p}}\left(\frac{q}p\underline{\lambda}(2-\overline{\lambda})\right)^{-\frac{pq}{2(q-p)}}$, then either $x_k\in X^*_\epsilon$ for some $k\in \IN$ or there exist $\tau\in (0,1)$ and $N\in \IN$ such that
    \[
    \dist^2(x_{k+N},X^*)\le \tau^k \dist^2(x_N,X^*) +2^{\frac2q-\frac{p}q}\eta^{-\frac2q} \epsilon^{\frac{2}q}\quad \mbox{for each } k\in \IN.
    \]
    \end{enumerate}
\end{enumerate}
\end{theorem}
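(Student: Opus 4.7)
The plan is to recognize that Theorem \ref{thm-cr-sgmin} is a direct consequence of the abstract convergence rate results from Section 3, once the inexact subgradient method is shown to fit the unified framework. The verification of fit is already done in Lemma \ref{lem-BI-epsilon}: it establishes that Algorithm \ref{alg-IneSGM} satisfies conditions (H1), (H2) and (H3) with the explicit parameters
\[
\alpha_k \equiv 2L^{-\frac1p}, \quad \beta_k \equiv 1, \quad \gamma_k \equiv 1,
\]
so in particular $\alpha = 2L^{-1/p}$, $\beta = 1$, $\gamma = 1$ in the notation of \eqref{eq-abs-0}. Thus the strategy is simply to plug these constants into the three abstract theorems and match the resulting bounds to the three parts of the conclusion.

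For part (I) I would invoke Theorem \ref{thm-abs-con} with the constant stepsize $v_k \equiv v$. The quantity $\beta v/\alpha$ appearing in that theorem becomes $L^{1/p}v/2$ after substitution, so the region in part (I)(i) takes the form $2^{1/p-1}\eta^{-1/p}(\epsilon^{1/p}+L^{1/p}v/2)$ as claimed. For part (I)(ii), the smallness condition $\epsilon^{1/p}+\beta v/\alpha < \eta^{-2/(q-2p)}(2p/(\alpha v q))^{q/(q-2p)}$ becomes the stated threshold after substituting $\alpha=2L^{-1/p}$, and the region follows by the same plug-in. The coercivity hypothesis is imposed precisely so that Theorem \ref{thm-abs-con} applies.

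For part (II) I would appeal to Theorem \ref{thm-abs-dimi}(ii) with $q = 2p$ and $\epsilon > 0$: since conditions (H1)--(H3) hold and $f$ is coercive, the inequality $\dist^2(x_k,X^*) \le C\tau^k + (2\epsilon/\eta)^{1/p}$ transfers verbatim; the constant $C$ will depend on the initial data and on the universal constants just listed. For part (III) I would apply Theorem \ref{thm-abs-dyn}: the dynamic stepsize \eqref{eq-dyn-sgmin} is exactly \eqref{eq-dyn+stepsize} after substituting $\alpha_k=2L^{-1/p}$ and $\beta_k=1$ (since then $\alpha_k/(2\beta_k) = L^{-1/p}$, which is absorbed into the $1/p$-th power via division by $L$ inside the bracket). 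In the case $q=p$ the coefficient $\alpha^2/(4\beta) = L^{-2/p}$ gets squared out against $\eta^{2/p}$ and the region $2^{2/p-1}\eta^{-2/p}\epsilon^{2/p}$ emerges; in the case $q>p$ with small $\epsilon$, the smallness threshold of Theorem \ref{thm-abs-dyn}(iii) becomes the stated one and the bound $2^{2/q-p/q}\eta^{-2/q}\epsilon^{2/q}$ follows.

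There is no real obstacle: the only work is bookkeeping of the constants when specializing the abstract bounds. The most delicate step is cross-checking that the threshold on $\epsilon$ in part (III)(ii) matches the one in Theorem \ref{thm-abs-dyn}(iii) after the substitution $\alpha=2L^{-1/p}$, $\beta=1$; this amounts to verifying that $(\alpha^2 q/(4\beta p))^{-pq/(2(q-p))}\eta^{-p/(q-p)}$ reduces to the claimed expression $(L/\eta)^{q/(q-p)}(q\underline{\lambda}(2-\overline{\lambda})/p)^{-pq/(2(q-p))}$ (up to the expected rearrangement of powers of $L$ and $\eta$). Once that algebraic check is carried out the proof is complete, and I would simply write: \emph{The conclusions follow immediately from Theorems \ref{thm-abs-con}, \ref{thm-abs-dimi} and \ref{thm-abs-dyn}, respectively, by Lemma \ref{lem-BI-epsilon}.}
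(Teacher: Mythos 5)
Your proposal is exactly the paper's approach: the paper gives no separate proof of Theorem \ref{thm-cr-sgmin}, stating only that it follows by applying Theorems \ref{thm-abs-con}, \ref{thm-abs-dimi} and \ref{thm-abs-dyn} to Algorithm \ref{alg-IneSGM}, which satisfies (H1)--(H3) with $\alpha_k\equiv 2L^{-\frac1p}$, $\beta_k\equiv 1$, $\gamma_k\equiv 1$ by Lemma \ref{lem-BI-epsilon}. Your constant bookkeeping is correct throughout (and you rightly flag the threshold in part (III)(ii) as the one place where the powers of $\eta$ require care when matching Theorem \ref{thm-abs-dyn}(iii)), so the proof is complete as written.
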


\subsection{Conditional subgradient method}
The standard subgradient method (i.e., Algorithm \ref{alg-SGM}) usually suffers from a zig-zagging phenomenon and sustains a slow convergence in practical applications. To avoid the zig-zagging phenomenon and speed up the convergence behavior,  an idea of conditional subgradient method was proposed for either convex optimization \cite{LarssonPS96} or quasi-convex optimization problems \cite{HuJNCA16b}, which is stated as follows. It is worth mentioning that the algorithmic procedure of the conditional subgradient method is totally different from the conditional gradient method (also named the Franke-Wolfe method) \cite{CGM2018}, although they share a similar name.

\begin{algorithm}\label{alg-CondSGM}
{\rm
Select an initial point $x_1\in \R^n$ and a sequence of stepsizes $\{v_k\}\subseteq (0,+\infty)$. For each $k\in \IN$, given $x_k$, we calculate
\begin{equation*}\label{eq-CondSGM-epsilon-0}
g_k\in \partial^* f(x_k)\cap \mathbf{S} \quad \mbox{and} \quad
\mu_k\in
\left\{\begin{matrix}
        N_X(x_k)\cap \mathbf{S},&{\mbox{if }x_k\notin {\rm int} X,}\\
       \{0\}, &{\mbox{if }x_k\in {\rm int} X,}
    \end{matrix}\right.
\end{equation*}
and update $x_{k+1}$ by
\begin{equation*}\label{eq-CondSGM-epsilon}
x_{k+1}:=\mathbb{P}_X (x_k-v_k (g_k+\mu_k)).
\end{equation*}
}
\end{algorithm}

It was proved in \cite[Lemma 3.2]{HuJNCA16b} the condition subgradient method satisfies the basic inequality as follow under the blanket assumption:
\[
\| x_{k+1}-x^* \|^2 \le \| x_k-x^* \|^2 - 2v_k \left(\frac{f(x_k)-f^*}{L}\right)^\frac1p + 4{v_k}^2
\]
whenever $f(x_k)>f^*$, and $\| x_{k+1}-x_k\|\le 2v_k$. This shows that conditions (H1)-(H3) are satisfied with
\[
\epsilon=0, \quad \alpha_k\equiv 2L^{-\frac1p}, \quad \beta_k\equiv 4, \quad \gamma_k\equiv 2.
\]
Hence, the convergence theorems established in the preceding section can be directly applied to the conditional subgradient method as follows.
\begin{theorem}\label{thm-sgmcg}
Let $\{x_k\}$ be a sequence generated by Algorithm \ref{alg-CondSGM}.
\begin{enumerate}[{\rm (i)}]
  \item If $v_k\equiv v>0$, then $\liminf_{k\to \infty} f(x_k)\le f^*+L\left(2 v\right)^p$.
  \item If $\{v_k\}$ is given by \eqref{eq-dim+stepsize}, then $\liminf_{k\to \infty} f(x_k)\le f^*$.
  \item If $\{v_k\}$ is given by
  \begin{equation}\label{eq-dyn-sgmcg}
    v_k:= \frac{\lambda_k}4 \left(\frac{f(x_k)-f^*}{L}\right)^{\frac1p},\quad \mbox{where $0<\underline{\lambda}\le \lambda_k \le \overline{\lambda}<2$},
  \end{equation}
  then $\lim_{k\to \infty} f(x_k)= f^*$.
\end{enumerate}
\end{theorem}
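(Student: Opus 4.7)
The plan is to reduce Theorem~\ref{thm-sgmcg} to a direct application of Theorem~\ref{thm-abs} by verifying that the iterates of Algorithm~\ref{alg-CondSGM} satisfy conditions (H1) and (H2) with appropriate constants, and then matching each stepsize rule to the corresponding assertion of Theorem~\ref{thm-abs}.

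First I would establish the basic inequality for Algorithm~\ref{alg-CondSGM}. Fix $x^*\in X^*$ and $k\in\IN$ with $f(x_k)>f^*$. Using the nonexpansiveness of $\mathbb{P}_X$ (noting $x^*\in X$) one has
\[
\|x_{k+1}-x^*\|^2\le\|x_k-v_k(g_k+\mu_k)-x^*\|^2=\|x_k-x^*\|^2-2v_k\langle g_k+\mu_k,x_k-x^*\rangle+v_k^2\|g_k+\mu_k\|^2.
\]
Two facts are needed to control the inner product: (a) because $\mu_k$ lies in the normal cone $N_X(x_k)$ (or is zero) and $x^*\in X$, one has $\langle\mu_k,x_k-x^*\rangle\ge 0$; (b) by Lemma~\ref{lem-HC-epsilon} applied with $\epsilon=0$ (which reduces to the corresponding property of quasi-subgradients used in \cite{HuJNCA16b}) and the H\"older condition, $\langle g_k,x_k-x^*\rangle\ge\bigl((f(x_k)-f^*)/L\bigr)^{1/p}$. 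Combining these with $\|g_k+\mu_k\|\le 2$ and $\|g_k\|=1$, I obtain
\[
\|x_{k+1}-x^*\|^2\le\|x_k-x^*\|^2-2v_k\left(\frac{f(x_k)-f^*}{L}\right)^{1/p}+4v_k^2,
\]
as well as $\|x_{k+1}-x_k\|\le v_k\|g_k+\mu_k\|\le 2v_k$ via nonexpansiveness (with $x_k\in X$).

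This shows that conditions (H1), (H2) and (H3) from Section~3 hold for the sequence generated by Algorithm~\ref{alg-CondSGM} with the constants
\[
\epsilon=0,\qquad \alpha_k\equiv\alpha=2L^{-1/p},\qquad \beta_k\equiv\beta=4,\qquad \gamma_k\equiv 2.
\]
Then the three assertions follow by direct substitution into Theorem~\ref{thm-abs}. For (i), Theorem~\ref{thm-abs}(i) gives $\liminf f(x_k)\le f^*+(\beta v/\alpha)^p=f^*+L(2v)^p$. For (ii), Theorem~\ref{thm-abs}(ii) applies verbatim and yields $\liminf f(x_k)\le f^*$. For (iii), one checks that the dynamic stepsize \eqref{eq-dyn-sgmcg} coincides with the generic rule \eqref{eq-dyn+stepsize}: indeed
\[
\frac{\alpha_k\lambda_k}{2\beta_k}[f(x_k)-f^*]_+^{1/p}=\frac{2L^{-1/p}\lambda_k}{8}[f(x_k)-f^*]_+^{1/p}=\frac{\lambda_k}{4}\left(\frac{f(x_k)-f^*}{L}\right)_+^{1/p},
\]
so Theorem~\ref{thm-abs}(iii) yields $\lim f(x_k)\le f^*$, which combined with $f(x_k)\ge f^*$ gives equality.

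The main obstacle is (a) above, namely showing $\langle g_k+\mu_k,x_k-x^*\rangle$ is controlled from below in the presence of the conditional term $\mu_k$: the normal-cone bound $\langle\mu_k,x_k-x^*\rangle\ge 0$ is the essential ingredient and is exactly what makes the conditional correction compatible with the quasi-subgradient inequality. Everything else amounts to matching constants and invoking Theorem~\ref{thm-abs}, which already packages the three stepsize analyses in the unified framework.
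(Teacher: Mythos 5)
Your proposal is correct and follows essentially the same route as the paper: verify that Algorithm \ref{alg-CondSGM} satisfies (H1)--(H3) with $\epsilon=0$, $\alpha_k\equiv 2L^{-1/p}$, $\beta_k\equiv 4$, $\gamma_k\equiv 2$, and then invoke Theorem \ref{thm-abs} for each stepsize rule. The only difference is that the paper simply cites \cite[Lemma 3.2]{HuJNCA16b} for the basic inequality, whereas you derive it directly from the nonexpansiveness of the projection, the normal-cone bound $\langle\mu_k,x_k-x^*\rangle\ge 0$, and the $\epsilon=0$ case of Lemma \ref{lem-HC-epsilon}; your constant matching (including the identification of \eqref{eq-dyn-sgmcg} with \eqref{eq-dyn+stepsize}) is accurate.
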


\begin{theorem}\label{thm-cp-sgmcg}
Let $\delta>0$, and let $\{x_k\}$ be a sequence generated by Algorithm \ref{alg-CondSGM}.
\begin{enumerate}[{\rm (i)}]
  \item Let $K_1:= \frac{L^{\frac1p}\dist^2(x_1,X^*)}{2 v \delta} $ and $v_k\equiv v>0$. Then
  $\min\limits_{1\le k\le K_1} f(x_k)\le f^*+\left(2 L^{\frac1p}v+\delta\right)^p$.
  \item Let $K_2:=\left(\frac{(1-s)L^{\frac1p}\dist^2(x_1,X^*)}{2 c \delta}\right)^{\frac1{1-s}}$ and $\{v_k\}$ be given by \eqref{eq-dim+stepsize}. Then
  $\min\limits_{1\le k\le K_2} f(x_k)\le f^*+\left(2 L^{\frac1p}ck^{-s}+\delta\right)^p$.
  \item Let $K_3:= \frac{4L^{\frac2p}\dist^2(x_1,X^*)}{\underline{\lambda}(2-\overline{\lambda})\delta^2} $ and $\{v_k\}$ be given by \eqref{eq-dyn-sgmcg}. Then $\min\limits_{1\le k\le K_3} f(x_k)\le f^*+\delta^p$.
\end{enumerate}
\end{theorem}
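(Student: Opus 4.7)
The plan is to obtain Theorem \ref{thm-cp-sgmcg} as an immediate corollary of the abstract complexity theorem (Theorem \ref{thm-ce-abs}) by specializing it to the conditional subgradient method. The paragraph preceding the theorem already records that the iterates of Algorithm \ref{alg-CondSGM} satisfy conditions (H1)--(H3) under the blanket assumption, with explicit constant parameter sequences
\[
\epsilon=0,\qquad \alpha_k\equiv 2L^{-\frac1p},\qquad \beta_k\equiv 4,\qquad \gamma_k\equiv 2.
\]
Since these parameters are constant, I would first note that $\alpha_{\inf}=2L^{-\frac1p}$ and $\beta_{\sup}=4$, so that Theorem \ref{thm-ce-abs} applies verbatim with these values.

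For assertion (i), I would substitute into the expression $K_1=\dist^2(x_1,X^*)/(\alpha_{\inf}v\delta)$ from Theorem \ref{thm-ce-abs}(i) to recover $K_1=L^{\frac1p}\dist^2(x_1,X^*)/(2v\delta)$, and simplify the upper bound $\left(\frac{\beta_{\sup}}{\alpha_{\inf}}v+\delta\right)^p+\epsilon=\left(2L^{\frac1p}v+\delta\right)^p$. Assertion (ii) is analogous: the formula $K_2=\left((1-s)\dist^2(x_1,X^*)/(\alpha_{\inf}c\delta)\right)^{1/(1-s)}$ in Theorem \ref{thm-ce-abs}(ii) becomes the claimed $K_2$ upon plugging $\alpha_{\inf}=2L^{-\frac1p}$, and the function value bound reduces to $\left(2L^{\frac1p}ck^{-s}+\delta\right)^p$.

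For assertion (iii), the only nontrivial step is to verify that the dynamic stepsize rule \eqref{eq-dyn-sgmcg} of the conditional subgradient method agrees with the abstract dynamic rule \eqref{eq-dyn+stepsize} under the above parameter choices. Indeed,
\[
\frac{\alpha_k\lambda_k}{2\beta_k}\bigl[f(x_k)-f^*-\epsilon\bigr]_+^{\frac1p}
=\frac{2L^{-\frac1p}\lambda_k}{8}\bigl[f(x_k)-f^*\bigr]_+^{\frac1p}
=\frac{\lambda_k}{4}\left(\frac{f(x_k)-f^*}{L}\right)^{\frac1p},
\]
which is exactly \eqref{eq-dyn-sgmcg}. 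Substituting $\alpha_{\inf}=2L^{-\frac1p}$ and $\beta_{\sup}=4$ into $K_3=4\beta_{\sup}\dist^2(x_1,X^*)/(\alpha_{\inf}^2\underline{\lambda}(2-\overline{\lambda})\delta^2)$ yields $K_3=4L^{\frac2p}\dist^2(x_1,X^*)/(\underline{\lambda}(2-\overline{\lambda})\delta^2)$, and the function value bound $\delta^p+\epsilon=\delta^p$ follows since $\epsilon=0$.

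There is really no obstacle to overcome: the entire proof is a bookkeeping exercise in substituting the correct constants into Theorem \ref{thm-ce-abs}. The one place where care is needed is in matching the dynamic stepsize \eqref{eq-dyn-sgmcg} against \eqref{eq-dyn+stepsize}, since the factor $\frac14$ in \eqref{eq-dyn-sgmcg} arises precisely from the ratio $\alpha_k/(2\beta_k)=L^{-\frac1p}/4$ and would be easy to misstate; this one-line check is the sole substantive step of the proof.
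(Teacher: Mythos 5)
Your proposal is correct and follows exactly the route the paper intends: the theorem is obtained by substituting $\alpha_{\inf}=2L^{-\frac1p}$, $\beta_{\sup}=4$ (and $\epsilon=0$) into Theorem \ref{thm-ce-abs}, after checking that \eqref{eq-dyn-sgmcg} is the specialization of \eqref{eq-dyn+stepsize}. All of your arithmetic checks out, including the factor $\frac{\lambda_k}{4}$ in the dynamic stepsize and the value $K_3=\frac{4L^{\frac2p}\dist^2(x_1,X^*)}{\underline{\lambda}(2-\overline{\lambda})\delta^2}$.
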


\begin{theorem}\label{thm-cr-sgmcg}
Let $\{x_k\}$ be a sequence generated by Algorithm \ref{alg-CondSGM}. Suppose that $X^*$ is the set of boundedly weak sharp minima of order $q$ for $f$ over $X$ with modulus $\eta$.
\begin{enumerate}[{\rm (I)}]
  \item Suppose that $v_k\equiv v>0$ and that $f$ is coercive.
      \begin{enumerate}[{\rm (i)}]
      \item If $q= 2p$, then either $x_k\in X^*$ for some $k\in \IN$ or there exist $\tau\in [0,1)$ and $N\in \IN$ such that
        \[
        \dist^2(x_{k+N},X^*)\le \tau^k \dist^2(x_N,X^*) +2^{\frac1p}\left(\frac{L}{\eta}\right)^{\frac1p}v \quad \mbox{for each } k\in \IN.
        \]
      \item If $q>2p$ and $v<\left(2^{p}\frac{L}{\eta}(\frac{p}{2q})^{\frac{q}{2}}\right)^{\frac{1}{q-p}}$, then either $x_k\in X^*$ for some $k\in \IN$ or there exist $\tau\in (0,1)$, $c>0$ and $N\in \IN$ such that
        \[
        \dist^2(x_{k+N},X^*)\le \tau^k \dist^2(x_N,X^*) +2^{\frac2q}\left(\frac{L}{\eta}\right)^{\frac2q}v^{\frac{2p}q}\quad \mbox{for each } k\in \IN.
        \]
    \end{enumerate}
    \item Suppose that $\{v_k\}$ is given by \eqref{eq-dim+stepsize}, $q=2p$ and that $f$ is coercive. Then, either $x_k\in X^*$ for some $k\in \IN$ or there exists $N\in \IN$ such that
    \begin{equation*}\label{eq-cr-con2}
    \dist^2(x_k,X^*)\le 2c\left(\frac{2L}\eta\right)^{\frac1p}k^{-s}\quad \mbox{for each } k\ge N.
    \end{equation*}
  \item Suppose that $\{v_k\}$ is given by \eqref{eq-dyn-sgmcg}.
    \begin{enumerate}
      \item[{\rm (i)}] If $q= p$, then there exist $\tau\in [0,1)$ and $N\in \IN$ such that
        \[
        \dist^2(x_{k+N},X^*)\le \tau^k \dist^2(x_N,X^*) \quad \mbox{for each }  k\in \IN.
        \]
        \item[{\rm (ii)}] If $q>p$, then there exist $\gamma>0$ and $N\in \IN$ such that
          \[
          \dist^2(x_{k+N},X^*)\le \frac{\dist^2(x_N,X^*)}{(1+\gamma k)^{\frac{p}{q-p}}}\quad \mbox{for each } k\in \IN.
          \]
    \end{enumerate}
\end{enumerate}
\end{theorem}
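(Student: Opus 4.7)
The plan is to obtain Theorem \ref{thm-cr-sgmcg} as a direct corollary of the abstract convergence-rate results established in Section 3.3, applied to Algorithm \ref{alg-CondSGM}. The bridge has already been built in the paragraph preceding the statement: by \cite[Lemma 3.2]{HuJNCA16b}, any sequence $\{x_k\}$ generated by the conditional subgradient method satisfies (H1)--(H3) with
\[
\epsilon=0,\quad \alpha_k\equiv 2L^{-1/p},\quad \beta_k\equiv 4,\quad \gamma_k\equiv 2,
\]
so in the notation of Section 3, $\alpha=2L^{-1/p}$, $\beta=4$, $\gamma=2$. I would also record the compatibility of the stepsize rules: the dynamic rule \eqref{eq-dyn-sgmcg} coincides with the abstract rule \eqref{eq-dyn+stepsize} under this identification since $\alpha_k\lambda_k/(2\beta_k)=\lambda_k/(4L^{1/p})$.

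With these identifications in hand, each of the three parts follows from the corresponding abstract theorem. For Part (I), I would invoke Theorem \ref{thm-abs-con}; since $\beta v/\alpha=2L^{1/p}v$, in case (i) the additive term $2^{1/p-1}\eta^{-1/p}(\epsilon^{1/p}+\beta v/\alpha)$ simplifies to $2^{1/p}(L/\eta)^{1/p}v$, and in case (ii) the additive term $2^{2/q-2p/q}\eta^{-2/q}(\beta v/\alpha)^{2p/q}$ collapses to $2^{2/q}(L/\eta)^{2/q}v^{2p/q}$. For Part (II), Theorem \ref{thm-abs-dimi}(i) applies (since $\epsilon=0$), and $(\beta c/\alpha)(2/\eta)^{1/p}=2c(2L/\eta)^{1/p}$ yields the stated coefficient. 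For Part (III), Theorem \ref{thm-abs-dyn}(i)--(ii) with $\epsilon=0$ produces the pure linear decay when $q=p$ (because the $\epsilon$-noise term vanishes) and the sublinear rate with exponent $p/(q-p)$ when $q>p$.

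The only nontrivial step is algebraic bookkeeping in Part (I)(ii): starting from the abstract admissibility requirement $\beta v/\alpha<\eta^{-2/(q-2p)}\bigl(2p/(\alpha v q)\bigr)^{q/(q-2p)}$, I would raise both sides to the $(q-2p)$-power, multiply through by $v^q$, use $q/p-(q-2p)/p=2$ to collect the $L$ exponents, and isolate $v^{2(q-p)}$. Taking the $(2(q-p))$-th root and regrouping powers of $2$ via $2^{(2p-q)/(2(q-p))}=2^{p/(q-p)}\cdot 2^{-q/(2(q-p))}$ should recover precisely the threshold $v<\bigl(2^{p}(L/\eta)(p/(2q))^{q/2}\bigr)^{1/(q-p)}$ stated in the theorem. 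I expect this to be the main (though still routine) obstacle; all other steps are direct substitutions, and no new analytic ingredient beyond the unified framework of Section 3 is needed.
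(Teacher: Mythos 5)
Your proposal is correct and follows exactly the route the paper intends: the paper offers no separate proof of Theorem \ref{thm-cr-sgmcg}, but simply notes that Algorithm \ref{alg-CondSGM} satisfies (H1)--(H3) with $\epsilon=0$, $\alpha_k\equiv 2L^{-\frac1p}$, $\beta_k\equiv 4$, $\gamma_k\equiv 2$ and then specializes Theorems \ref{thm-abs-con}, \ref{thm-abs-dimi} and \ref{thm-abs-dyn}. Your parameter identifications, the check that \eqref{eq-dyn-sgmcg} matches \eqref{eq-dyn+stepsize}, and the algebra recovering the constants $2^{\frac1p}(L/\eta)^{\frac1p}v$, $2^{\frac2q}(L/\eta)^{\frac2q}v^{\frac{2p}q}$, $2c(2L/\eta)^{\frac1p}$ and the threshold on $v$ all check out.
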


\section*{Acknowledgment}
The authors are grateful to  the editor and anonymous reviewers for their valuable suggestions and remarks which allow us to improve the original presentation of the paper.

%

\end{document}